\definecolor{purple}{rgb}{.9,0,.9}
\definecolor{green}{rgb}{0,.7,0}
\newcommand{\Nat}{\mathbb{N}}
\newcommand{\Real}{\mathbb{R}}
\numberwithin{equation}{section}
\newcommand{\qedwhite}{\hfill \ensuremath{\Box}}
\newcommand{\beqn}{\begin{equation}}
\newcommand{\eeqn}{\end{equation}}
\newcommand{\symbfnt}[1]{{\cal #1}} 
\newcommand{\cA}{{\symbfnt{A}}}
\newcommand{\cC}{{\symbfnt{C}}}\newcommand{\cD}{{\symbfnt{D}}}
\newcommand{\cI}{{\symbfnt{I}}}
\newcommand{\cK}{{\symbfnt{K}}}\newcommand{\cL}{{\symbfnt{L}}}
\newcommand{\cM}{{\symbfnt{M}}}
\newcommand{\cP}{{\symbfnt{P}}}
\newcommand{\cQ}{{\symbfnt{Q}}}\newcommand{\cR}{{\symbfnt{R}}}
\newcommand{\cS}{{\symbfnt{S}}}
\newcommand{\cX}{{\symbfnt{X}}}
\newcommand{\be}{{\bf e}}
\newcommand{\bn}{{\bf n}}
\newcommand{\bu}{{\bf u}}
\newcommand{\ve}{\varepsilon}
\newcommand{\sgn}{\operatorname{sgn}}
\newcommand{\len}{\operatorname{len}}
\newcommand{\arctantwo}{\operatorname{arctan2}}
\newcommand{\ks}{\kappa_\sigma}
\let\oldr@@t\r@@t
\def\r@@t#1#2{%
\setbox0=\hbox{$\oldr@@t#1{#2\,}$}\dimen0=\ht0
\advance\dimen0-0.2\ht0
\setbox2=\hbox{\vrule height\ht0 depth -\dimen0}%
{\box0\lower0.4pt\box2}}
\LetLtxMacro{\oldsqrt}{\sqrt}
\renewcommand*{\sqrt}[2][\ ]{\oldsqrt[#1]{#2} }
\newcommand*{\inlineequation}[2][]{%
  \begingroup
    \refstepcounter{equation}%
    \ifx\\#1\\%
    \else
      \label{#1}%
    \fi
    \relpenalty=10000 %
    \binoppenalty=10000 %
    \ensuremath{%
      #2%
    }%
    ~\@eqnnum
  \endgroup
}
\title{Approximating the Nonlocal Curvature of Planar Curves}
\author[1]{Cole Fleming}
\author[2]{Brian Seguin}
\affil[1]{Department of Mathematics and Statistics, Loyola University Chicago, Chicago, USA}
\affil[2]{Department of Mathematics and Statistics, Loyola University Chicago, Chicago, USA}
\date{}
\begin{document}

\maketitle

\begin{abstract}

\noindent Here we establish several results on the nonlocal curvature of planar curves. First we show how to express the nonlocal curvature of a curve relative to a point in terms of the nonlocal curvatures of simpler components of that curve relative to the same point. To obtain these results, it is necessary to extend the definition of nonlocal curvature to points off of the curve. We also find a formula for the nonlocal curvature of a line segment relative to any point in the plane in terms of the incomplete beta function. These results are then used to prove an approximation theorem, which states that the nonlocal curvature of a planar curve with some H\"older regularity can be approximated by the nonlocal curvature of a linear interpolating spline associated with the curve. \\

\noindent \textbf{Keywords:} nonlocal mean curvature, computing nonlocal curvature, interpolating linear splines\\

\noindent \textbf{Mathematics Subject Classification:} 65D15, 49Q05\\

\noindent \textit{Dedicated to Matt and Jolie Fleming.}

\end{abstract}

\section{Introduction}

Motivated by the study of models in which the Laplacian is replaced by the fractional Laplacian, Caffarelli, Roquejoffre, and Savin \cite{caffarelli2009nonlocalminimalsurfaces} introduced the fractional perimeter and studied its minimizers. A set with smooth boundary that minimizes its fractional perimeter must satisfy a pointwise condition on its boundary.  As classical minimal surfaces have to have zero mean curvature at each point, this suggests using the pointwise condition for the minimizers of the fractional perimeter to define a notion of fractional mean curvature. Since fractional mean curvature depends on the entire boundary of the set, it is commonly referred to as the nonlocal mean curvature.

To extend these definitions to surfaces that need not be the boundary of an open set, Paroni, Paolo, and Seguin \cite{rparoni} introduced the concept of fractional area. The fractional area of a smooth surface $\cM\subseteq{}\Real^n$ relative to a bounded, open set $\Omega\subseteq{}\Real^n$ for any $\sigma\in(0,1)$ is defined by
\begin{equation}\label{fractional_area_def}
	\operatorname{Area_\sigma}(\cM,\Omega)\coloneqq{}\frac{1}{2\alpha_{n-1}}\int_{\cX(\cM)}\frac{\max\{\chi_\Omega(x),\chi_\Omega(y)\}}{|x-y|^{\sigma+n}}dxdy,
\end{equation}
where $\alpha_n$ denotes the volume of the $n$-dimensional unit ball, $\cX(\cM)$ is the set of pairs of points in $\Real^n$ such that the line segment between these two points intersects $\cM$ an odd number of times, and $\chi_\Omega$ is the characteristic function of $\Omega$. The minimizers of this fractional area satisfy $H_\sigma(z)=0$, where $H_\sigma(z)$ is the nonlocal mean curvature at $z\in\cM$. The exact definition of $H_\sigma(z)$ in the $n=2$ case, the case of interest in this work, can be found in the next section.

Numerious results have been established for the nonlocal mean curvature. For instance, the nonlocal mean curvature converges to the classical mean curvature in the appropriate limit---namely,
\begin{equation}
	\lim_{\sigma\uparrow{}1}(1-\sigma)H_\sigma(z)=H(z).
\end{equation}
This was first established by Abatangelo and Valdinoci \cite{AV14} for surfaces that are the boundary of a set and for surfaces with or without boundary by Paroni, Paolo, and Seguin \cite{rparoni}. It was also shown by Cabré et al.~\cite{cabre} that for any nonempty, open, bounded set $E$ with $C^{2,a}$ boundary, for $a>\sigma$, and constant $H_\sigma$, $E$ is a ball. Further results have been established involving motion by nonlocal mean curvature flow via level set methods by Chambolle, Morini, and Ponsiglione\ \cite{Chambolle2015}. See also the work of Saez and Valdinoci \cite{evolution}. In addition, results surrounding the regularity of the minimizers of the fractional perimeter have appeared in literature, such as in \cite{CAFFARELLI2013843} by Cafarelli and Valdinoci. There are many other results as well. See, for example, the survey article by Dipierro  and Valdinoci \cite{Stickiness} and the references therein.

As the previous paragraph indicates, most preexisting literature surrounding nonlocal mean curvature is analytical rather than computational in nature. In fact, to date we have only encountered one example of a computational result. Namely, the nonlocal mean curvature of a sphere was calculated by Paroni, Podio-Guidugli, and Seguin \cite{Seguin}. This is likely due to the difficulties encountered when calculating the nonlocal curvature, as the definition involves an improper integral taken in the principal value sense.

Here we focus on the two-dimensional case, $n=2$, where a surface is a curve, and seek to find a method for approximating the value of the nonlocal (mean) curvature of any $C^{1,a}$ curve $\cC\subseteq{}\Real^2$ with $a>\sigma$. This is accomplished in part by Theorem~\ref{lemma3}, where we argue that for any such curve $\cC$ and point $z$, there exists a set $\cS$, which is the union of partial linear interpolating splines of $\cC$, such that the nonlocal curvature of $\cS$ and of $\cC$ relative to $z$ are arbitrarily close. Since a partial linear interpolating spline consists of line segments, to calculate the nonlocal curvature of such a spline one needs the nonlocal curvature of a line segment, which we obtain a formula for involving the incomplete beta function in Lemma~\ref{nlc_2p_ldef}.

To achieve the results mentioned in the previous paragraph we establish a series of results that allow one to calculate the nonlocal curvature of a curve relative to a point in terms of the nonlocal curvatures of the components that make up a decomposition of the curve relative to the same point. To accomplish this, it is necessary to extend the definition of the nonlocal curvature to a point $z$ not on the curve. When this is done, it is necessary to compute the nonlocal curvature relative to some unit vector $\bu$ that plays the role of the normal to the curve. Outlining these results, in Proposition~\ref{skpm} we decompose the curve into its parts in each half plane determined by $z$ and $\bu$, Proposition~\ref{disj_thm} involves decomposing the curve into its disjoint components, and lastly in Proposition~\ref{partit_thm} decomposes the curve into what we call radial components -- parts of the curve that intersect any ray cast from $z$ at most once.

In Section~\ref{section:general_results} we recall the definition of nonlocal curvature of a curve in $\Real^2$, as well as establish several facts about the nonlocal curvature of curves. Namely, these results aim to allow one to express the nonlocal curvature of a curve at a point through the nonlocal curvatures of its components. In Section~\ref{section:line_segment}, we calculate the nonlocal curvature of a line segment relative to any point, including points off of the line segment. In Section~\ref{section:general_evaluation} we introduce the concept of a linear interpolating spline, and argue that the nonlocal curvature of a curve may be approximated arbitrarily well by the nonlocal curvature of one such linear interpolating spline. Lastly, in Section~\ref{section:summary}, we make use of the results in Sections \ref{section:line_segment} and \ref{section:general_evaluation} to outline steps on how to obtain an approximation for the nonlocal curvature of any $C^{1,a}$ curve with $a>\sigma$.

\section{Results for general curves} \label{section:general_results}

Consider a one-dimensional $C^1$ manifold $\cC$ with boundary in $\Real^2$ | that is, $\cC$ is a simple, $C^1$ planar curve which is not necessarily connected. Assume that $\cC$ has finite length. Given an open line segment $\cL$, we say that $\cL$ and $\cC$ are transversal if at all $z\in\cL\cap\cC$, the tangent to $\cC$ at $z$ is not parallel to $\cL$. It is known that if $\cL$ and $\cC$ are transversal, then $\cC\cap\cL$ is a finite set. See, for example, Guillemin and Polack \cite{GP74}. Define $\cX(\cC)$ to be the set of all pairs of points $(x,y)\in\Real^2\times\Real^2$ such that the open line segment $\cL$ connecting $x$ and $y$ is transversal to $\cC$ and the number of elements in $\cC\cap\cL$ is odd. Let $\bn$ be a vector-valued function defined on $\cC$ such that $\bn(z)$ is normal to the curve at $z\in\cC$. With all of this in place, we can define two sets:
\begin{align}
	\nonumber \cA_e(z)&\coloneqq{}\big\{y\in \Real^2\ |\ \big((z,y)\in \cX(\cC)\ \text{and}\ (z-y)\cdot \bn(z)> 0\big)\\
	\label{cAe} &\hspace{1in}\text{or } \big((z,y)\in \cX(\cC)^c\ \text{and}\ (z-y)\cdot \bn(z)< 0\big)\big\},\\
	\nonumber \cA_i(z)&\coloneqq{}\big\{y\in \Real^2\ |\ \big((z,y)\in \cX(\cC)^c\ \text{and}\ (z-y)\cdot \bn(z)> 0\big)\\
	\label{cAi} &\hspace{1in}\text{or } \big((z,y)\in \cX(\cC)\ \text{and}\ (z-y)\cdot \bn(z)< 0\big)\big\}.
\end{align}
One can view $\cA_e(z)$ and $\cA_i(z)$ as, respectively, the exterior and interior of the curve $\cC$ at $z$ relative to $\bn(z)$. This language is motivated by the fact that in the case where $\cC$ is the boundary of an open set $\cD$ and $\bn(z)$ is its exterior unit normal at $z$, then $\cA_e(z)$ agrees with the complement of $\cD$ up to a set of zero area and $\cA_i(z)$ agrees with $\cD$ up to a set of zero area. This motivates introducing the following notation: if $A$ and $B$ are subsets of $\Real^2$, write $A\cong B$ if the area of the set $(A\setminus B)\cup (B\setminus A)$ is zero. Notice that if $f$ is an integrable function and $A\cong B$, then
\begin{equation}
	\int_A f(x)dx=\int_B f(x)dx.
\end{equation}
It is true that
\beqn
\Real^2\cong \cA_e(z)\cup\cA_i(z).
\eeqn
See Figure~\ref{fig:AeAi} for the depiction of these sets for a given curve.

\begin{figure}[h]
    \centering%
    \includegraphics[height=7cm]{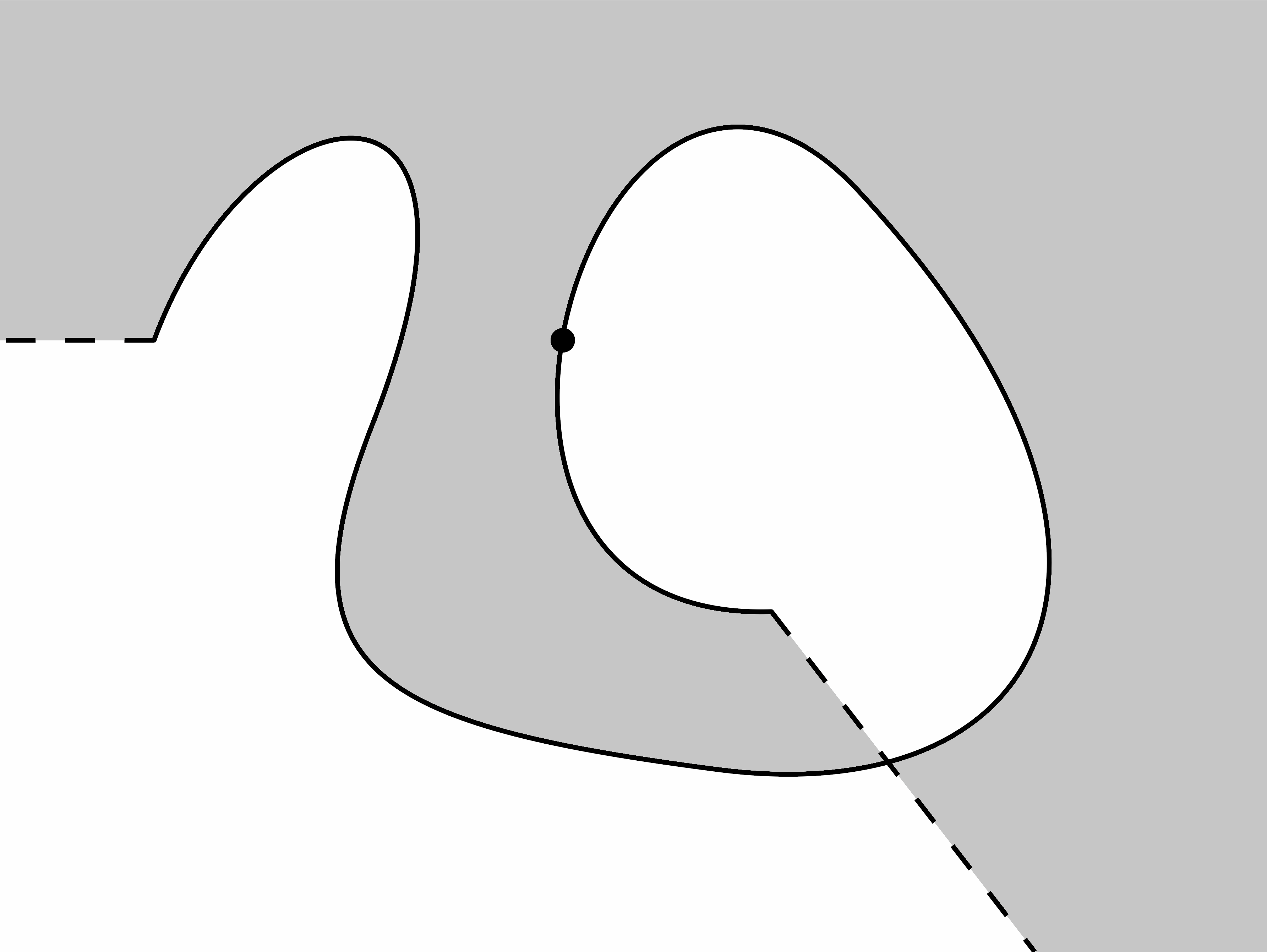}
    \put(-56,155){$\cA_i(z)$}
    \put(-100,100){$\cA_e(z)$}
    \put(-240,40){$\cA_e(z)$}
    \put(-145,127.5){$\vector(1,-0.15){25}$}
    \put(-140,134){$\bn(z)$}
    \put(-160,125){$z$}
    \caption{Depiction of the sets $\cA_e(z)$ and $\cA_i(z)$ for a given curve $\cC$ and point $z$. The grey region depicts $\cA_i(z)$ while the white region depicts $\cA_e(z)$, up to sets of zero area. The curve $\cC$ is the solid line while the dashed line segments depict other parts of the boundary between $\cA_e(z)$ and $\cA_i(z)$.}
    \label{fig:AeAi}
\end{figure}

The nonlocal (mean) curvature $\ks$ at a point $z\in\cC$ is given by
\begin{equation}\label{ksdef}
    \ks(z,\cC,\bn(z))\coloneqq\frac{1}{2}\int_{\Real^2} \frac{\widehat\chi_\cC(z,y)}{|z-y|^{2+\sigma}}dy,
\end{equation}
where
\begin{equation}\label{chiS}
	\widehat\chi_\cC(z,y)\coloneqq{}
	{\small \begin{cases}
	1 & y\in\cA_i(z),\\
	0 & y\not\in \cA_i(z)\cup\cA_e(z),\\
	-1 & y\in\cA_e(z).
	\end{cases} }
\end{equation}
The integral in \eqref{ksdef} must be computed using the principle value, meaning
\begin{equation}\label{epsilon_nlc_def}
    \ks(z,\cC,\bn(z))=\lim_{\varepsilon\rightarrow 0}\frac{1}{2}\int_{\Real^2\backslash B_\varepsilon(z)} \frac{\widehat\chi_\cC(z,y)}{|z-y|^{2+\sigma}}dy,
\end{equation}
where $B_\varepsilon(z)$ is the ball of radius $\varepsilon$ centered at $z$.  Notice that the definition of $\ks(z,\cC,\bn(z))$ is unchanged if some, or all, of the end points of $\cC$ are removed. For this reason, we may consider curves that do not include their endpoints. Also, this definition makes sense when $\cC$ is only piecewise $C^1$, though $z$ must be at a point of $\cC$ where the curve is $C^1$.

According to \eqref{ksdef}, $\ks(z,\cC,\bu)$ is only defined when $z\in\cC$ and $\bu$ is one of the two unit vectors that is orthogonal to $\cC$ at $z$.  However, this definition can be extended to when $z$ is not on the curve, in which case $\bu$ can be any unit vector.  To do so, first notice that if $z\not\in\cC$, then for any unit vector $\bu$, there is an $\varepsilon>0$ such that the line segment $[z-\varepsilon\bu^\perp,z+\varepsilon\bu^\perp]$\footnote{Given $x,y\in\Real^2$, $[x,y]$ denotes the line segment with endpoints $x$ and $y$.}, with $\bu^\perp$ obtained from $\bu$ by rotating it clockwise $90^\circ$, does not intersect $\cC$.  Moreover, it follows from the definition of the nonlocal curvature that
\begin{equation}
    \ks(z,\cC\cup[z-\delta\bu^\perp,z+\delta\bu^\perp],\bu)
\end{equation}
yields the same value for any $\delta<\varepsilon$.  Thus, we can define
\begin{equation}\label{ksdefex}
    \ks(z,\cC,\bu)\coloneqq\lim_{\delta\downarrow 0}\ks(z,\cC\cup[z-\delta\bu^\perp,z+\delta\bu^\perp],\bu).
\end{equation}
From this definition, upon examination of \eqref{cAe}, \eqref{cAi}, and \eqref{ksdef}, we see that
\begin{equation}\label{inverse_unitv_identity}
    \ks(z,\cC,-\bu)=-\ks(z,\cC,\bu).
\end{equation}
Moreover, the definition \eqref{ksdefex} allows for the computation of the nonlocal curvature of the empty set:
\begin{equation}\label{nlc_emptyset}
	\ks(z,\emptyset,\bu)=\lim_{\delta\downarrow{}0}\ks(z,[z-\delta\bu^{\perp},z+\delta\bu^{\perp}],\bu)=0.
\end{equation}
For the rest of this section, consider $z\in\Real^2$ and a unit vector $\bu$ to be fixed. If $z\in\cC$, then we assume that $\bu$ is one of the normals to the curve $\cC$ at $z$.  

The first result we establish about the nonlocal curvature yields a simpler formula for $\kappa_\sigma(z,\cC,\bu)$ in the case when $\cC$ lies on one side of the line passing through $z$ with normal $\bu$.  Towards this end, we introduce the notation
\begin{equation}\label{P+-}
	\cP_- \coloneqq \{p\in\Real^2\ |\ (p-z)\cdot\bu\leq 0\}\quad \text{and}\quad   \cP_+\coloneqq \{p\in\Real^2\ |\ (p-z)\cdot\bu \geq 0\}.
\end{equation}

\begin{proposition}\label{ksrep}
If $\cC\subseteq \cP_-$, then 
\beqn\label{ksrepe}
\kappa_\sigma(z,\cC,\bu)=-\int_{\cA_e(z)\cap \cP_-}|z-y|^{-2-\sigma}dy,
\eeqn
while if $\cC\subseteq \cP_+$, then
\beqn\label{ksrepi}
\kappa_\sigma(z,\cC,\bu)=\int_{\cA_i(z)\cap \cP_+}|z-y|^{-2-\sigma}dy.
\eeqn
\end{proposition}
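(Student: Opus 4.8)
The plan is to prove \eqref{ksrepe} directly and to obtain \eqref{ksrepi} from it. For the latter: if $\cC\subseteq\cP_+$ then $\cC$ lies in the half-plane $\{p:(p-z)\cdot(-\bu)\le0\}$ playing the role of $\cP_-$ for $-\bu$; replacing $\bu$ by $-\bu$ interchanges $\cA_e(z)$ and $\cA_i(z)$, as is immediate from \eqref{cAe} and \eqref{cAi}; and $\ks(z,\cC,-\bu)=-\ks(z,\cC,\bu)$ by \eqref{inverse_unitv_identity}. Feeding these three facts into \eqref{ksrepe} applied with $-\bu$ in place of $\bu$ yields \eqref{ksrepi}. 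Throughout I use \eqref{ksdef} and \eqref{epsilon_nlc_def} directly; when $z\notin\cC$ this is legitimate because the segment $[z-\delta\bu^\perp,z+\delta\bu^\perp]$ appearing in \eqref{ksdefex} lies on the line $L$ through $z$ orthogonal to $\bu$, and the open segment joining $z$ to any $y\notin L$ meets $L$ only at its excluded endpoint $z$, so that added segment alters neither $\cX(\cdot)$ nor $\widehat\chi_\cC(z,\cdot)$ off $L$.

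Assume $\cC\subseteq\cP_-$ and write $\cP_+^\circ$, $\cP_-^\circ$ for the open half-planes $\{(p-z)\cdot\bu>0\}$ and $\{(p-z)\cdot\bu<0\}$. The first step is to identify $\widehat\chi_\cC(z,\cdot)$ on each of these. If $y\in\cP_+^\circ$, the open segment from $z$ to $y$ lies in $\cP_+^\circ$, hence is disjoint from $\cP_-\supseteq\cC$, so $(z,y)\in\cX(\cC)^c$; since $(z-y)\cdot\bu<0$, \eqref{cAe} puts $y$ in $\cA_e(z)$, giving $\widehat\chi_\cC(z,y)=-1$. If $y\in\cP_-^\circ$, then $(z-y)\cdot\bu>0$, and \eqref{cAe} and \eqref{cAi} show $y\in\cA_e(z)$ when $(z,y)\in\cX(\cC)$ and $y\in\cA_i(z)$ otherwise; thus $\cP_-^\circ$ meets neither of the vanishing cases of \eqref{chiS}, and $\widehat\chi_\cC(z,y)=1-2\,\mathbf{1}_{\cA_e(z)}(y)$ for every such $y$ outside the area-zero set where $[z,y]$ fails to be transversal to $\cC$ (cf.\ \cite{GP74}).

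The second step is the computation. Substituting these two descriptions into the truncated integral of \eqref{epsilon_nlc_def} and splitting $\Real^2\setminus B_\varepsilon(z)$ along its parts in $\cP_+^\circ$ and $\cP_-^\circ$ (the line $L$ has zero area) expresses $\ks(z,\cC,\bu)$ as the limit, as $\varepsilon\downarrow0$, of
\[
-\tfrac12\!\int_{\cP_+^\circ\setminus B_\varepsilon(z)}\!\!\!|z-y|^{-2-\sigma}dy+\tfrac12\!\int_{\cP_-^\circ\setminus B_\varepsilon(z)}\!\!\!|z-y|^{-2-\sigma}dy-\!\int_{(\cA_e(z)\cap\cP_-^\circ)\setminus B_\varepsilon(z)}\!\!\!|z-y|^{-2-\sigma}dy,
\]
each term being absolutely convergent for fixed $\varepsilon>0$ since the integrand is radial about $z$ with integrable tail. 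The key observation is that reflection across $L$ is an isometry fixing $z$ that maps $\cP_-^\circ\setminus B_\varepsilon(z)$ onto $\cP_+^\circ\setminus B_\varepsilon(z)$ and preserves $|z-y|$; hence the first two integrals are equal and cancel. What survives is $-\int_{(\cA_e(z)\cap\cP_-^\circ)\setminus B_\varepsilon(z)}|z-y|^{-2-\sigma}dy$, and letting $\varepsilon\downarrow0$, monotone convergence (the integrand is nonnegative) together with $\cA_e(z)\cap\cP_-^\circ\cong\cA_e(z)\cap\cP_-$ yields \eqref{ksrepe}.

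The main point requiring care is the interplay between the case $z\in\cC$ and the principal value. When $z\in\cC$ the set $\cA_e(z)\cap\cP_-$ accumulates at $z$ — near $z$ it is the thin wedge pinched between $\cC$ and its tangent line at $z$ — so the reflection cancellation must be performed on the truncated integrals before the limit is taken (it is exactly this cancellation that makes the principal value converge), and the final monotone limit should be read in $[-\infty,\infty]$, reducing to a finite identity precisely when $\ks(z,\cC,\bu)$ is well-defined, which is assured under the $C^{1,a}$, $a>\sigma$, hypothesis used later in the paper. The remaining ingredients — exhaustiveness of the $\cA_i(z)/\cA_e(z)$ dichotomy on $\cP_-^\circ$, and nullity of the non-transversal exceptional set — are routine.
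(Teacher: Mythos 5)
Your proof is correct and follows essentially the same route as the paper: you identify $\widehat\chi_\cC$ on the two open half-planes (equivalently, the paper's relations $\cA_i(z)\subseteq\cP_-$ and $\cA_e(z)\cong\cP_+\cup(\cA_e(z)\cap\cP_-)$) and cancel the two half-plane integrals by reflection symmetry, with the only differences being that you keep the $\varepsilon$-truncation explicit before cancelling (a welcome bit of extra care with the principal value, which the paper handles formally) and that you deduce \eqref{ksrepi} from \eqref{ksrepe} via \eqref{inverse_unitv_identity} instead of repeating the symmetric computation, which the paper simply omits as ``similar.''
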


\begin{proof}
First assume that $\cC\subseteq \cP_-$.  Notice that in this case
\begin{equation}\label{Asetrel}
\cA_i(z)\subseteq\cP_-\quad\text{and}\quad \cA_e(z)\cong\cP_+\cup(\cA_e(z)\cap\cP_-).
\end{equation}
Thus, it follows from \eqref{ksdef} and \eqref{Asetrel} that the nonlocal curvature is given by
\begin{align}
\ks(z,\cC,\bu)&=\frac{1}{2}\Big(\int_{\cA_i(z)}-\int_{\cA_e(z)}\Big)|z-y|^{-2-\sigma}dy\\
&=\frac{1}{2}\Big(\int_{\cP_-}-\int_{\cA_e(z)\cap\cP_-}-\int_{\cA_e(z)\cap\cP_-}-\int_{\cP_+}\Big)|z-y|^{-2-\sigma}dy\\
&=-\int_{\cA_e(z)\cap \cP_-}|z-y|^{-2-\sigma}dy.
\end{align}
The proof of the other case is similar, so will be skipped.\qedwhite{}
\end{proof}

Of course, not every curve is contained in one of the half planes determined by $z$ and $\bu$.  The next result tells us that the nonlocal curvature of a curve can be decomposed into the sum of the nonlocal curvatures of the parts of the curve in each of these half planes.
\begin{proposition}\label{skpm}
    Setting $\cC_+\coloneqq \cC\cap \text{\rm Int}(\cP_+)$ and $\cC_-\coloneqq \cC\cap \text{\rm Int}(\cP_-)$, we have
    \beqn
    \kappa_\sigma(z,\cC,\bu)=\kappa_\sigma(z,\cC_+,\bu)+\kappa_\sigma(z,\cC_-,\bu).
    \eeqn
\end{proposition}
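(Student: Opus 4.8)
The plan is to reduce everything to Proposition~\ref{ksrep} by way of the extension \eqref{ksdefex}. Since $\cC_{\pm}\subseteq\operatorname{Int}(\cP_{\pm})$ while $z$ lies on the common boundary line $\partial\cP_{+}=\partial\cP_{-}$, the point $z$ is not on $\cC_{\pm}$, so $\ks(z,\cC_{\pm},\bu)$ is the quantity defined in \eqref{ksdefex}. Writing $\cC_{\pm}^{\delta}\coloneqq\cC_{\pm}\cup[z-\delta\bu^{\perp},z+\delta\bu^{\perp}]$ and noting that the adjoined segment lies on $\partial\cP_{+}$ and is therefore disjoint from $\cC_{\pm}$ for \emph{every} $\delta>0$, the value $\ks(z,\cC_{\pm}^{\delta},\bu)$ does not depend on $\delta$, so $\ks(z,\cC_{\pm},\bu)=\ks(z,\cC_{\pm}^{\delta},\bu)$. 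Because $\cC_{+}^{\delta}\subseteq\cP_{+}$ and $\cC_{-}^{\delta}\subseteq\cP_{-}$, Proposition~\ref{ksrep} applies to each and gives
\beqn
\ks(z,\cC_{+},\bu)=\int_{\cA_{i}^{\cC_{+}^{\delta}}(z)\cap\cP_{+}}|z-y|^{-2-\sigma}\,dy,\qquad\ks(z,\cC_{-},\bu)=-\int_{\cA_{e}^{\cC_{-}^{\delta}}(z)\cap\cP_{-}}|z-y|^{-2-\sigma}\,dy,
\eeqn
where for a curve $\cK$ I write $\cA_{i}^{\cK}(z)$ and $\cA_{e}^{\cK}(z)$ for the sets \eqref{cAi} and \eqref{cAe} with $\cC$ replaced by $\cK$.

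The heart of the argument is the localization property: $y\in\cA_{i}^{\cC_{+}^{\delta}}(z)\iff y\in\cA_{i}^{\cC}(z)$ for every $y\in\operatorname{Int}(\cP_{+})$, and symmetrically $y\in\cA_{e}^{\cC_{-}^{\delta}}(z)\iff y\in\cA_{e}^{\cC}(z)$ for every $y\in\operatorname{Int}(\cP_{-})$. I would prove the first of these by observing that $\cP_{+}$ is convex, $z\in\partial\cP_{+}$, and $y\in\operatorname{Int}(\cP_{+})$, so the open segment $(z,y)$ lies entirely in $\operatorname{Int}(\cP_{+})$; since $\operatorname{Int}(\cP_{+})$ is disjoint from the adjoined segment and meets $\cC$ only in $\cC_{+}=\cC\cap\operatorname{Int}(\cP_{+})$, the three sets $(z,y)\cap\cC$, $(z,y)\cap\cC_{+}$ and $(z,y)\cap\cC_{+}^{\delta}$ are equal, and at each of their (common) points the curves $\cC$, $\cC_{+}$, $\cC_{+}^{\delta}$ share the same tangent line. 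Hence $(z,y)$ is transversal to $\cC$ if and only if it is transversal to $\cC_{+}^{\delta}$, with equal numbers of crossings, so $(z,y)\in\cX(\cC)\iff(z,y)\in\cX(\cC_{+}^{\delta})$; as the remaining condition in \eqref{cAe}--\eqref{cAi} only involves the sign of $(z-y)\cdot\bu$, which does not see the curve, the equivalence follows, and the second one is proved in the same way. Because $\partial\cP_{+}$ has zero area, the localization property yields $\cA_{i}^{\cC_{+}^{\delta}}(z)\cap\cP_{+}\cong\cA_{i}^{\cC}(z)\cap\cP_{+}$ and $\cA_{e}^{\cC_{-}^{\delta}}(z)\cap\cP_{-}\cong\cA_{e}^{\cC}(z)\cap\cP_{-}$, so adding the two identities above gives
\beqn
\ks(z,\cC_{+},\bu)+\ks(z,\cC_{-},\bu)=\int_{\cA_{i}^{\cC}(z)\cap\cP_{+}}|z-y|^{-2-\sigma}\,dy-\int_{\cA_{e}^{\cC}(z)\cap\cP_{-}}|z-y|^{-2-\sigma}\,dy.
\eeqn

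It then remains to show that this right-hand side is $\ks(z,\cC,\bu)$, which is the analogue of Proposition~\ref{ksrep} for a curve that need not lie in one half-plane; I would prove it by repeating the bookkeeping in the proof of Proposition~\ref{ksrep}, now with $\cC$ in place of a one-sided curve: split $\cA_{i}^{\cC}(z)$ and $\cA_{e}^{\cC}(z)$ along $\cP_{\pm}$, use $\Real^{2}\cong\cA_{i}^{\cC}(z)\cup\cA_{e}^{\cC}(z)$ to rewrite $\int_{\cA_{e}^{\cC}(z)\cap\cP_{+}}=\int_{\cP_{+}}-\int_{\cA_{i}^{\cC}(z)\cap\cP_{+}}$ and $\int_{\cA_{i}^{\cC}(z)\cap\cP_{-}}=\int_{\cP_{-}}-\int_{\cA_{e}^{\cC}(z)\cap\cP_{-}}$, and note that $\int_{\cP_{-}\setminus B_{\varepsilon}(z)}|z-y|^{-2-\sigma}dy-\int_{\cP_{+}\setminus B_{\varepsilon}(z)}|z-y|^{-2-\sigma}dy=0$ for every $\varepsilon>0$, since the reflection across $\partial\cP_{+}$ fixes $z$ and $B_{\varepsilon}(z)$ and swaps $\cP_{+}$ with $\cP_{-}$. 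Letting $\varepsilon\downarrow0$ finishes the argument. The step I expect to be the main obstacle is precisely this last one: the individual integrals $\int_{\cP_{\pm}}|z-y|^{-2-\sigma}dy$ diverge at $z$, so the symmetry cancellation must be carried out with $B_{\varepsilon}(z)$ still excised and only then passed to the limit, and the convergence of the remaining integrals---equivalently, the existence of $\ks(z,\cC_{\pm},\bu)$, which is implicit in the statement---is where the regularity of $\cC$ is used; there is also the routine check that $\cC_{\pm}^{\delta}$ is an admissible configuration for \eqref{ksdef}--\eqref{ksdefex} at $z$, which holds for all $\delta>0$ because $\cC_{\pm}\subseteq\operatorname{Int}(\cP_{\pm})$.
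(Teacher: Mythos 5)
Your proposal is correct and follows essentially the same route as the paper's proof: the key localization relations $\cA_i(z,\cC)\cap\cP_+\cong\cA_i(z,\cC_+)\cap\cP_+$ and $\cA_e(z,\cC)\cap\cP_-\cong\cA_e(z,\cC_-)\cap\cP_-$, Proposition~\ref{ksrep} applied to each half-plane piece, and the reflection cancellation of $\int_{\cP_+}$ against $\int_{\cP_-}$ carried out in the principal-value sense. The only difference is presentational: you make explicit the use of the extended definition \eqref{ksdefex} (since $z\notin\cC_\pm$) and supply a proof of the $\cong$ relations that the paper merely asserts.
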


\begin{proof}
    To prove this result we will use the notation introduced in \eqref{cAe} and \eqref{cAi} but add to it the dependence on the curve.  Doing so allows us to realize that
    \beqn
    \cA_i(z,\cC)\cap\cP_+\cong\cA_i(z,\cC_+)\cap\cP_+\ \text{and}\ \cA_e(z,\cC)\cap\cP_-\cong\cA_e(z,\cC_-)\cap\cP_-.
    \eeqn
    It thus follows from the definition of the nonlocal curvature and Proposition~\ref{ksrep} that
    \begin{align*}
        \ks(z,\cC,\bu)&=\frac{1}{2}\Big(\int_{\cA_i(z,\cC)}-\int_{\cA_e(z,\cC)}\Big)|z-y|^{-2-\sigma}dy\\
        &=\frac{1}{2}\Big(\int_{\cA_i(z,\cC)\cap\cP_+}+\int_{\cA_i(z,\cC)\cap\cP_-}-\int_{\cA_e(z,\cC)\cap\cP_+}-\int_{\cA_e(z,\cC)\cap\cP_-}\Big)|z-y|^{-2-\sigma}dy\\
        &=\frac{1}{2}\Big(\int_{\cA_i(z,\cC)\cap\cP_+}+\int_{\cA_i(z,\cC)\cap\cP_-}+\int_{\cP_+}\\
        &\qquad-\int_{\cP_-}-\int_{\cA_e(z,\cC)\cap\cP_+}-\int_{\cA_e(z,\cC)\cap\cP_-}\Big)|z-y|^{-2-\sigma}dy\\
        &=\frac{1}{2}\Big(\int_{\cA_i(z,\cC)\cap\cP_+}+\int_{\cA_i(z,\cC)\cap\cP_-}+\int_{\cA_i(z,\cC)\cap\cP_+}+\int_{\cA_e(z,\cC)\cap\cP_+}\\
        &\qquad-\int_{\cA_i(z,\cC)\cap\cP_-}-\int_{\cA_e(z,\cC)\cap\cP_-}-\int_{\cA_e(z,\cC)\cap\cP_+}-\int_{\cA_e(z,\cC)\cap\cP_-}\Big)|z-y|^{-2-\sigma}dy\\
        &=\Big(\int_{\cA_i(z,\cC)\cap\cP_+}-\int_{\cA_e(z,\cC)\cap\cP_-}\Big)|z-y|^{-2-\sigma}dy\\
        &=\Big(\int_{\cA_i(z,\cC_+)\cap\cP_+}-\int_{\cA_e(z,\cC_-)\cap\cP_-}\Big)|z-y|^{-2-\sigma}dy\\
        &=\ks(z,\cC_+,\bu)+\ks(z,\cC_-,\bu).
    \end{align*}
    \qedwhite{}
\end{proof}

The nonlocal curvature is generally not an additive function of the curve. However, under a particular assumption on the curve it is. We say that a curve $\cC$ is radial with respect to the point $z$ if for any unit vector $\be$ the set $\cC\cap\{z+\delta\be\ |\ \delta>0\}$ is either empty or a singleton.

\begin{proposition}\label{disj_thm}
    If $\cC$ is radial with respect to $z$, then for any disjoint decomposition $\cC=\bigcup_{k\in\Nat}\cC_k$ of $\cC$, where each $\cC_k$ is a curve, we have
    \begin{equation}\label{nonlockadd}
        \ks(z,\cC,\bu)=\sum_{k\in\Nat}\ks(z,\cC_k,\bu).
    \end{equation}
\end{proposition}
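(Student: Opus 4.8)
The plan is to reduce to the case in which $\cC$ lies in one of the open half‑planes determined by $z$ and $\bu$, and there to exploit that radiality forces the open segment $(z,y)$ to meet $\cC$ at most once; this is exactly what makes the region of integration split as a genuine \emph{disjoint} union over the pieces $\cC_k$.

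First I would invoke Proposition~\ref{skpm}. Writing $\cC_\pm = \cC\cap\operatorname{Int}(\cP_\pm)$ and, for each $k$, $\cC_{k,\pm} = \cC_k\cap\operatorname{Int}(\cP_\pm)$, it gives $\ks(z,\cC,\bu)=\ks(z,\cC_+,\bu)+\ks(z,\cC_-,\bu)$ and $\ks(z,\cC_k,\bu)=\ks(z,\cC_{k,+},\bu)+\ks(z,\cC_{k,-},\bu)$. Since the $\cC_k$ are pairwise disjoint, $\cC_+=\bigsqcup_k\cC_{k,+}$ and $\cC_-=\bigsqcup_k\cC_{k,-}$ are again disjoint decompositions, and each $\cC_{k,\pm}$, being a subset of the radial curve $\cC$, is radial with respect to $z$. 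So it suffices to prove \eqref{nonlockadd} under the extra hypothesis $\cC\subseteq\operatorname{Int}(\cP_+)$ (the hypothesis $\cC\subseteq\operatorname{Int}(\cP_-)$ being handled the same way, via formula \eqref{ksrepe} for $\cA_e(z,\cC)\cap\cP_-$), and then add the resulting two series. Note that after this reduction $z$ lies on the line bounding $\cP_+$, so $z\notin\cC$ and the extended definition \eqref{ksdefex} is in force.

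With $\cC\subseteq\operatorname{Int}(\cP_+)$, Proposition~\ref{ksrep} gives $\ks(z,\cC,\bu)=\int_{\cA_i(z,\cC)\cap\cP_+}|z-y|^{-2-\sigma}\,dy$ and, since $\cC_k\subseteq\cC\subseteq\cP_+$, also $\ks(z,\cC_k,\bu)=\int_{\cA_i(z,\cC_k)\cap\cP_+}|z-y|^{-2-\sigma}\,dy$ for every $k$. The crux is then the set identity
\[
\cA_i(z,\cC)\cap\cP_+\;\cong\;\bigsqcup_{k\in\Nat}\bigl(\cA_i(z,\cC_k)\cap\cP_+\bigr),
\]
with the union disjoint. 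To prove it, unwind \eqref{cAi}: off the zero‑area set where $(z-y)\cdot\bu=0$ or where $(z,y)$ fails to be transversal to $\cC$, one has $y\in\cA_i(z,\cC)\cap\cP_+$ exactly when $(z,y)$ meets $\cC$ an odd number of times, which by radiality means exactly once; the single intersection point then lies in precisely one $\cC_k$, so $y$ lies in precisely one of the sets $\cA_i(z,\cC_k)\cap\cP_+$. Conversely, if $(z,y)$ meets some $\cC_k$ once, radiality of $\cC$ forces it to meet $\cC$ exactly that once. This yields both exhaustion and pairwise disjointness. Since $|z-y|^{-2-\sigma}\ge 0$, countable additivity of the integral over a disjoint union (Tonelli) now gives $\ks(z,\cC,\bu)=\sum_k\int_{\cA_i(z,\cC_k)\cap\cP_+}|z-y|^{-2-\sigma}\,dy=\sum_k\ks(z,\cC_k,\bu)$; in particular each term is nonnegative and the series converges, with sum equal to $\ks(z,\cC,\bu)<\infty$, which legitimizes re‑adding the $\cP_+$ and $\cP_-$ series at the end.

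The main obstacle is the set identity above, and in particular the transversality bookkeeping behind it: one must check that the exceptional set of directions -- open segments from $z$ tangent to $\cC$ or to some $\cC_k$, together with the line through $z$ perpendicular to $\bu$ -- has zero area, and that off this set $(z,y)$ is transversal to $\cC$ if and only if it is transversal to every $\cC_k$. Everything else is structural: Propositions~\ref{skpm} and~\ref{ksrep} carry the geometric content, and nonnegativity of the kernel makes the countable sum unconditional. (One can also bypass Proposition~\ref{skpm} by checking directly that $\widehat\chi_\cC(z,y)=\sgn((z-y)\cdot\bu)\,(-1)^{N(z,y)}$, where $N(z,y)$ is the number of points of $(z,y)\cap\cC$, subtracting off the odd‑symmetric function $\sgn((z-y)\cdot\bu)\,|z-y|^{-2-\sigma}$ whose principal‑value integral vanishes by symmetry, and using that $\widehat\chi_\cC(z,y)-\sgn((z-y)\cdot\bu)=\sum_k\bigl(\widehat\chi_{\cC_k}(z,y)-\sgn((z-y)\cdot\bu)\bigr)$ pointwise with at most one nonzero summand; but the half‑plane reduction keeps the accounting cleaner.)
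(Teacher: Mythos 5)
Your proposal is correct and follows essentially the same route as the paper: reduce to a single half-plane via Proposition~\ref{skpm}, apply Proposition~\ref{ksrep}, and use radiality to split the region of integration disjointly over the pieces $\cC_k$, summing with the nonnegative (resp.\ nonpositive) kernel. The paper realizes this splitting by parametrizing the region in polar coordinates (splitting the angle set $I=\bigcup_k I_k$ with $r\geq\hat r(\theta)$) rather than through your parity/set-identity argument for $\cA_i(z,\cC)\cap\cP_+$, but the substance—and the (glossed) treatment of the transversal null set—is the same.
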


\begin{proof}
    From Proposition~\ref{skpm} it suffices to establish this result when the curve is contained in one of the half spaces determined by $z$ and $\bu$. Thus, without loss of generality, assume that $\cC\subseteq \cP_-$. Now, fix a disjoint decomposition of $\cC=\bigcup_{k\in\Nat}\cC_k$. Since $\cC$ is radial, for each angle $\theta\in[0,2\pi)$ the ray emanating from $z$ with angle $\theta$, measured counterclockwise from the direction of the positive $x$-axis, either does not cross $\cC$ or crosses it exactly once.  Let $I$ denote the set of angles for which the ray crosses $\cC$ exactly once.  For each $\theta\in I$, we can find a unique $\hat r(\theta)\geq 0$ such that
    \begin{equation}
        z+(\hat r(\theta)\cos\theta,\hat r(\theta)\sin\theta)\in\cC.
    \end{equation}
    The function $\hat r$ describes the curve $\cC$ in polar coordinates relative to $z$.  Thus,
    \begin{equation}
        \cA_e(z)\cap\cP_-\cong\{z+(r\cos\theta,r\sin\theta)\in\Real^2\ |\ \theta\in I, r\geq\Hat{r}(\theta)\}
    \end{equation}
    and, so, from Proposition~\ref{ksrep} the nonlocal curvature is given by
    \begin{align}
        \ks(z,\cC,\bu)&=-\int_{\cA_e(z)\cap \cP_-}|z-y|^{-2-\sigma}dy\\
        &=-\int_I\int_{\hat r(\theta)}^\infty r^{-1-\sigma} dr d\theta.\label{skappa1rep}
    \end{align}
    For every $k\in\Nat$, let $I_k$ be the subset of $I$ associated with the curve $\cC_k$, meaning that
    \begin{equation}
        \cC_k=\{z+(\hat r(\theta)\cos\theta,\hat r(\theta)\sin\theta)\ |\ \theta\in I_k\},\qquad{}k\in\Nat.
    \end{equation}
    Using \eqref{skappa1rep} and the fact that $I=\bigcup_{k\in\Nat}I_k$ is a disjoint union, we conclude that
	\begin{equation}
        \ks(z,\cC,\bu)=-\sum_{k\in\Nat}\int_{I_k}\int_{\hat r(\theta)}^\infty r^{-1-\sigma} dr d\theta\\
        =\sum_{k\in\Nat}\ks(z,\cC_k,\bu),\label{result-}
	\end{equation}
establishing the desired result.    \qedwhite{}
\end{proof}

The last result of this section shows how to compute the nonlocal curvature of a curve that is not radial with respect to $z$ in terms of disjoint pieces of the curve that are radial with respect to $z$.

\begin{proposition}\label{partit_thm}
    For every $n\in\Nat$, define the sets 
    \beqn\label{Cn}
    \cC_n\coloneqq{}\{p\in\cC\mid{}|\{\lambda p+(1-\lambda)z\ |\ \lambda\in(0,1]\}\cap\cC|=n\}. 
    \eeqn
    It follows that
    \begin{equation}\label{partit_thm_statement}
        \ks(z,\cC,\bu)=\sum_{n\in\Nat}(-1)^{n+1}\kappa_\sigma(z,\cC_n,\bu)
    \end{equation}
    and
    \begin{equation}\label{fin_intersection_approx}
    	\ks(z,\cC,\bu)=\lim_{N\to\infty}\ks\big(z,{\textstyle \bigcup\limits_{n=1}^N}\cC_n,\bu\big).
	\end{equation}
\end{proposition}
\begin{proof}
    From Proposition~\ref{skpm}, it suffices to prove \eqref{partit_thm_statement} and \eqref{fin_intersection_approx} when $\cC$ is contained in one of the half planes determined by $z$ and $\bu$, so, without loss of generality, assume $\cC\subseteq{}\cP_+$.
    Considering any $N\in\Nat$, let $\widetilde{\cC}_N=\bigcup_{n=1}^{N}\cC_n$, and write
    \begin{equation}\label{template_eq}
    	\ks(z,\cC,\bu)=\ks(z,\cC,\bu)-\ks(z,\widetilde{\cC}_N,\bu)+\ks(z,\widetilde{\cC}_N,\bu).
    \end{equation}
    Following from \eqref{ksdef}, we have
    \begin{equation}
    	|\ks(z,\cC,\bu)-\ks(z,\widetilde{\cC}_N,\bu)|=\Big|\frac{1}{2}\int_{\Real^2} \frac{\widehat\chi_\cC(z,y)}{|z-y|^{2+\sigma}}dy-\frac{1}{2}\int_{\Real^2} \frac{\widehat\chi_{\widetilde{\cC}_N}(z,y)}{|z-y|^{2+\sigma}}dy\Big|.
    \end{equation}
    For every $n\in\Nat$, define the sets\footnote{Given $x,y\in\Real^2$, $(x,y]$ denotes the line segment from $x$ to $y$ including the end point $y$ but not $x$.}
    \begin{align}
    	\cK_n &\coloneqq{} \{p\in\Real^2\mid{}|(z,p]\cap{}\widetilde{\cC}_N|\geq{}n\},\label{step1_proof_2_1}\\
    	\cR_n &\coloneqq{} \cK_n\setminus{}\cK_{n+1},\label{step1_proof_2}
    \end{align}
    We then notice that
    \beqn
    \widehat\chi_\cC(z,y)=\widehat\chi_{\widetilde{\cC}_N}(z,y)\quad{}\text{for}\ y\in\Real^2\setminus{}\cK_N
    \eeqn
	and, hence,
	\begin{equation}\label{kN_bound_dif}
		|\kappa_\sigma(z,\cC,\bu)-\ks(z,\widetilde{\cC}_N,\bu)|\leq{}\int_{\cK_N}|z-y|^{-\sigma-2}dy.
	\end{equation}
	Now, for the moment, assume that $\cC_N$ is nonempty. Let $d_N$ be the smallest distance from $z$ to $\cC_N$ and $[\theta_N,\theta_{N+1}]$ be the smallest interval such that
	\begin{equation}
		\cC_N\subseteq{}\{z+(r\cos\theta,r\sin\theta)\ |\ r>0,\ \theta\in[\theta_N,\theta_{N+1}]\}.
	\end{equation}
	Then,
	\begin{align}
		\int_{\cK_N}|z-y|^{-\sigma-2}dy &= \int_{\theta_N}^{\theta_{N+1}}\int_{d_N}^{\infty}r^{-\sigma-1}drd\theta\\
		&= (\theta_{N+1}-\theta_N)\tfrac{1}{\sigma}d_N^{-\sigma}\label{kN_Bound}.
	\end{align}
	Now, consider the arc $\cA_N$ of the circle with radius $d_N$ centered at $z$ between the angles $\theta_N$ and $\theta_{N+1}$, which has length
	\begin{equation}\label{aN_Bound}
		\len(\cA_N)=d_N(\theta_{N+1}-\theta_N).
	\end{equation}
	By construction, $\cA_N$ and $\cC_N$ both lie in the wedge with vertex $z$ determined by the angles $\theta_N$ and $\theta_{N+1}$. Since $d_N$ is the minimal distance from $z$ to $\cC_N$, $\len(\cA_N)\leq{}\len(\cC_N)$. Therefore, applying this inequality, \eqref{kN_Bound}, and \eqref{aN_Bound} to \eqref{kN_bound_dif} we have
	\begin{equation}\label{dif_almost_fin_bound}
		|\ks(z,\cC,\bu)-\ks(z,\widetilde{\cC}_N,\bu)|\leq{}\tfrac{1}{\sigma}d_N^{-\sigma-1}\len(\cC_N).
	\end{equation}
	Next, since
	\begin{equation}\label{cup_C_eq}
		\len(\cC)=\len\Big(\bigcup_{k\in\Nat}\cC_k\Big)=\sum_{k\in\Nat}\len(\cC_k)
	\end{equation}
	and $\cC$ has finite length, the summation in \eqref{cup_C_eq} must converge and, so, $\len(\cC_k)$ must approach 0 as $k$ approaches infinity. Moreover, it similarly follows from the finite length of $\cC$ that $d_N$ must approach a nonzero number as $N$ approaches infinity. Therefore, taking the limit of both sides of \eqref{dif_almost_fin_bound}, we have
	\begin{equation}\label{lim_dif_0_2}
		\lim_{N\to\infty}|\ks(z,\cC,\bu)-\ks(z,\widetilde{\cC}_N,\bu)|\leq{}\tfrac{1}{\sigma}\lim_{N\to\infty}d_N^{-\sigma-1}\len(\cC_N)=0.
	\end{equation}
	Now, observe that even if $\cC_N$ were to be empty, the left-hand side of \eqref{kN_bound_dif} would be zero and, so, the limit on the left-hand side of \eqref{lim_dif_0_2} would still be equal to zero in this case. Moreover, observe that following from the definition of $\widetilde{\cC}_N$ and \eqref{lim_dif_0_2}, we obtain \eqref{fin_intersection_approx}.	
	
	By \eqref{ksrepi} we have for any natural number $n\in[1,N]$ that
    \begin{equation}
        \kappa_\sigma(z,\cC_n,\bu) = \int_{\cK_n}|z-y|^{-\sigma-2}dy.
    \end{equation}
    Therefore
    \begin{equation}\label{summation_nat_kn_lbl}
        \sum_{n=1}^{N}(-1)^{n+1}\ks(z,\cC_n,\bu) = \sum_{n=1}^{N}(-1)^{n+1}\int_{\cK_n}|z-y|^{-\sigma-2}dy.
    \end{equation}
    Notice by \eqref{step1_proof_2_1} that $\cK_{N+1}=\emptyset$. Thus, we can split the rightmost summation in \eqref{summation_nat_kn_lbl} into the sum over even and odd $n$ to obtain
    \begin{align}
        \sum_{n=1}^{N}(-1)^{n+1}\ks(z,\cC_n,\bu) &= \sum_{n=1}^{\lceil{}N/2\rceil{}}\Big(\int_{\cK_{2n-1}}-\int_{\cK_{2n}}\Big)|z-y|^{-\sigma-2}dy\\
        \label{r2n_eq}&= \sum_{n=1}^{\lceil{}N/2\rceil{}}\int_{\cR_{2n-1}}|z-y|^{-\sigma-2}dy,
    \end{align}
    where $\lceil{}t\rceil{}$ denotes the ceiling of $t\in{}\Real$. On the other hand, by the definition of nonlocal curvature and \eqref{ksrepi}, we have
    \begin{equation}\label{r2n_gen}
        \ks(z,\widetilde{\cC}_N,\bu)=\sum_{n=1}^{\lceil{}N/2\rceil{}}\int_{\cR_{2n-1}}|z-y|^{-\sigma-2}dy.
    \end{equation}
    Combining \eqref{r2n_eq} and \eqref{r2n_gen}, we obtain
    \begin{equation}\label{fin_sum_2}
    	\ks(z,\widetilde{\cC}_N,\bu)=\sum_{n=1}^{N}(-1)^{n+1}\ks(z,\cC_n,\bu).
    \end{equation}
    Then, taking the limit of \eqref{template_eq}, we may apply both \eqref{lim_dif_0_2} and \eqref{fin_sum_2} to conclude that
    \begin{align}
		\ks(z,\cC,\bu) &=\sum_{n\in\Nat}(-1)^{n+1}\ks(z,\cC_n,\bu),
	\end{align}
which is \eqref{partit_thm_statement}.\qedwhite{}

\section{The nonlocal curvature of a line segment}\label{section:line_segment}

In this section, we aim to compute the nonlocal curvature of a line segment at any point in the plane relative to any unit vector. For ease of presentation, we introduce for $z\in[0,1]$ the notation
\begin{equation}\label{f_upsilon_def}
    \Psi_\sigma(z)\coloneqq{}\tfrac{1}{2}\sgn(z)\beta_{z^2}\big(\tfrac{1}{2},\tfrac{1+\sigma}{2}\big) ,
\end{equation}
where
\begin{equation}\label{beta_function_def}
	\beta_z(a,b)\coloneqq\int_0^{z}u^{a-1}(1-u)^{b-1}du
\end{equation}
is the incomplete beta function.

\begin{lemma}\label{lemma1}
    For $\sigma\in(0,1)$, the following are true:
    \begin{align}
	\label{lemma1.1}\int_{\frac{\pi}{2}}^{z}\sin^\sigma(\theta)d\theta &= -\Psi_\sigma(\cos z),\quad z\in[0,\pi],\\
       \label{lemma1.2}\int_{0}^z\cos^{\sigma}(\theta)d\theta &= \Psi_\sigma(\sin z),\quad z\in [-\pi/2,\pi/2].
    \end{align}
    
    \begin{proof}
	First, fix $z\in[0,\pi]$ and consider any $\theta$ strictly between $\pi/2$ and $z$. Notice that since $\sin(\theta)$ is positive and $\cos(\theta)$ and $\cos(z)$ have the same sign, we have that
        \beqn
        \int_{\frac{\pi}{2}}^{z}\sin^\sigma(\theta)d\theta=\sgn(\cos z)\int_{\frac{\pi}{2}}^{z}\frac{\sin(\theta)\sgn(\cos\theta)}{\left|\sin^{1-\sigma}(\theta)\right|}d\theta.
	\eeqn
	Furthermore, upon recalling that $|a|=\sqrt{a^2}$ for any $a\in\Real$ and $\sgn(a)=a/|a|$ for $a\ne{}0$, we obtain
	\begin{equation}\label{lemma1eqn2}
		\int_{\frac{\pi}{2}}^{z}\sin^\sigma(\theta)d\theta=\frac{\cos(z)}{2|\cos(z)|}\int_{\frac{\pi}{2}}^{z}\frac{2\sin(\theta)\cos(\theta)}{\sqrt{\cos^2(\theta)\sin^{2-2\sigma}(\theta)}}d\theta.
	\end{equation}
	Applying the substitution $u\mapsto\cos^2(\theta)$ to the right-hand side of \eqref{lemma1eqn2} yields
	\beqn
	\int_{\frac{\pi}{2}}^{z}\sin^\sigma(\theta)d\theta=-\frac{1}{2}\left(\frac{\cos(z)}{\left|\cos(z)\right|} \right )\int_0^{\cos^2(z)}u^{-\frac{1}{2}}(1-u)^{\frac{\sigma-1}{2}}du.
	\eeqn
	Using \eqref{beta_function_def}, we then have
	\begin{equation}\label{prevlemmastep}
		\int_{\frac{\pi}{2}}^{z}\sin^\sigma(\theta)d\theta=-\tfrac{1}{2}\big(\tfrac{\cos(z)}{\left|\cos(z)\right|}\big)\beta_{\cos^2(z)}\big( \tfrac{1}{2},\tfrac{1+\sigma}{2}\big).
	\end{equation}
	We may then simplify \eqref{prevlemmastep} using the notation in \eqref{f_upsilon_def} to obtain
	\beqn
	\int_{\frac{\pi}{2}}^{z}\sin^\sigma(\theta)d\theta=-\Psi_\sigma(\cos z).
	\eeqn
	One can then observe that using the substitution $\theta\mapsto{}\tfrac{\pi}{2}-\theta$ followed by $z\mapsto{}\tfrac{\pi}{2}-z$ in \eqref{lemma1.1}, we obtain \eqref{lemma1.2}.     
        \qedwhite{}
    \end{proof}
\end{lemma}

\begin{figure}[h]
    \centering%
    \includegraphics[height=6cm]{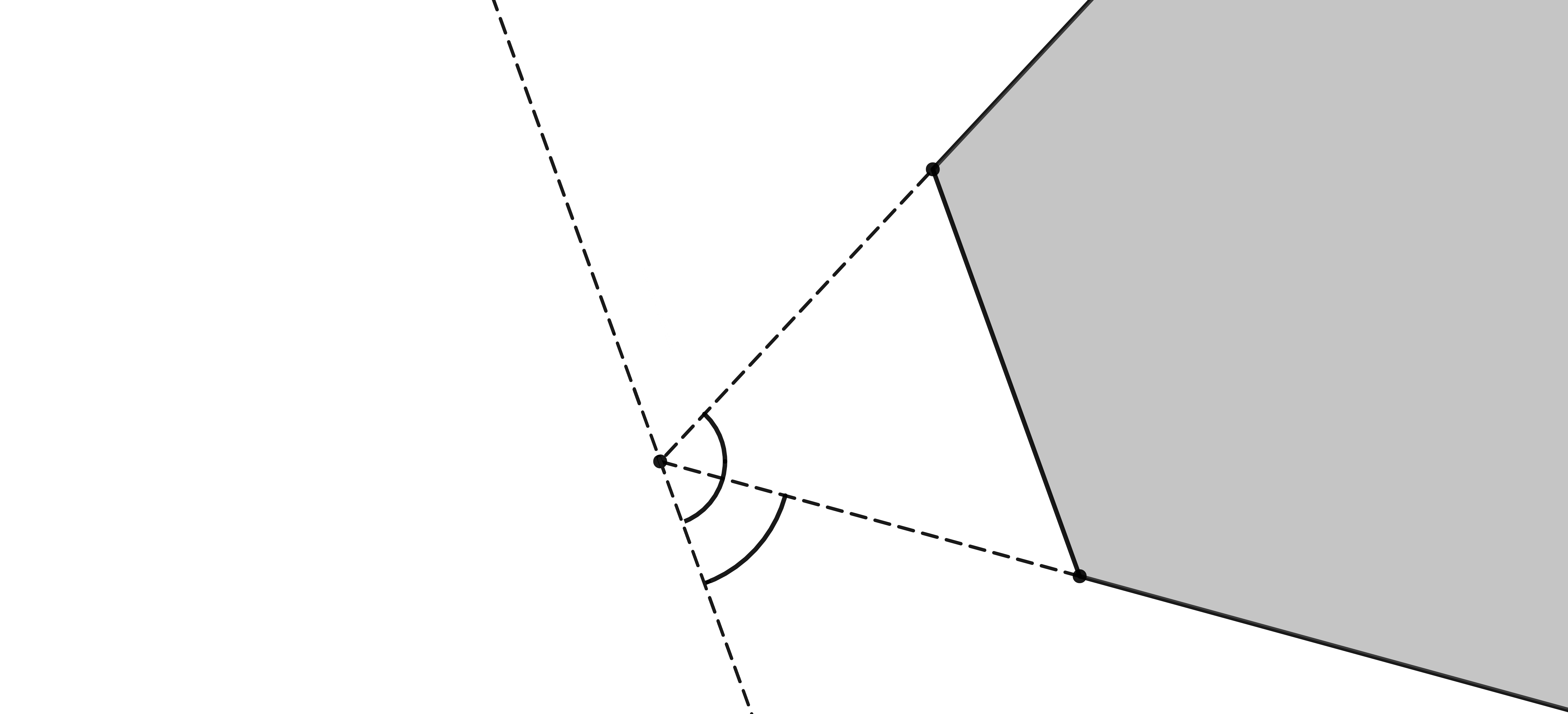}
    \put(-227,58){$z$}
    \put(-196,61){$\theta_2$}
    \put(-190,34){$\theta_1$}
    \put(-170,55){$r_1$}
    \put(-187,110){$r_2$}
    \put(-151,90){$\cL$}
    \put(-96,95){$\cA_i(z)\cap{}\cP_+$}
    \put(-223.5,78){$\vector(1,0.361173814898){25}$}
    \put(-220,85){$\bu$}
    \caption{Depiction of the line segment $\cL$ along with $z$ and $\bu$ in the plane, and the region $\cA_i(z)\cap{}\cP_+$.}
    \label{fig:3}
\end{figure}

Given a line segment $\cL$, we denote by $\cL_e$ its extension to an infinite line. Notice for any line segment $\cL$, unit vector $\bu$, and $z\in{}\cL_e$ that $\ks(z,\cC,\bu)=0$ from the definition of the nonlocal curvature. The formula for $\ks(z,\cC,\bu)$ in the case where $z\not\in\cL_e$ is presented in the next result.

\begin{lemma}\label{nlc_2p_ldef}
Consider a line segment $\cL$, a point $z\in\Real^2\setminus{}\cL_e$, and a unit vector $\bu$.  Assume that $\cL\subseteq\cP_+$(see \eqref{P+-}$_2$).  Choose a coordinate system so that $z$ is at the origin and $\bu$ points along the positive $y$-axis.  Let $(r_i,\theta_i)$, $i=1,2$, denote the polar coordinates of the end points of $\cL$ such that $\theta_1\leq{}\theta_2$, and let $(r_m,\theta_m)$ be the polar coordinates of the midpoint of $\cL$. If $\theta_\perp\in(-\pi/2,3\pi/2)$ is the angle of the ray emanating from $z$ which is perpendicular to $\cL_e$, then 
\begin{equation}\label{prop1_statement}
    \ks(z,\cL,\bu)=\frac{\sec^\sigma(\theta_m-\theta_{\perp})}{\sigma{}r_m^\sigma}\big(\Psi_\sigma(\sin(\theta_2-\theta_{\perp}))-\Psi_\sigma(\sin(\theta_1-\theta_{\perp}))\big).
\end{equation}
\end{lemma}

\begin{proof}
For the moment, assume that $\cL$ is not vertical. Let $m$ denote the slope of $\cL$ and $(x_\circ,y_\circ)=(r_\circ\cos\theta_\circ,r_\circ\sin\theta_\circ)$ be any point on $\cL_e$.  The line segment $\cL$ lies on the line
\begin{equation}\label{line_cartes}
    y-y_\circ=m(x-x_\circ)
\end{equation}
and, so, in polar coordinates it is described by the function
\begin{equation}\label{unsimp_rhat}
    \hat r(\theta)=\frac{r_\circ(\sin\theta_\circ-m\cos\theta_\circ)}{\sin\theta-m\cos\theta},\qquad \theta\in[\theta_1,\theta_2].
\end{equation}
Notice that this function is strictly positive since $z\not\in\cL_e$.  Define
\begin{equation}\label{phidef}
    \phi \coloneqq{} { \small\begin{cases}
        \arctan(1/m), & m\ne{}0, \\
        \pi/2, & m=0, \\
    \end{cases} }
\end{equation}
and observe that by the harmonic addition theorem \cite{Nay},
\begin{equation}\label{lspol}
    \hat r(\theta)\coloneqq{}\frac{r_\circ\cos(\theta_\circ+\phi)}{\cos(\theta+\phi)},\qquad \theta\in[\theta_1,\theta_2].
\end{equation}
Notice that in the case when $\cL$ is a vertical line segment, this polar function still describes $\cL$ if we set $\phi=0$. Now, utilizing \eqref{ksrepi}, we have
\begin{align}
    \kappa_\sigma(z,\cL,\bu) &= \int_{\cA_i(z,\cL)\cap{}\cP_+}|y|^{-\sigma-2}dy\\
    &= \int_{\theta_1}^{\theta_2}\int_{\hat{r}(\theta)}^{\infty}r^{-\sigma-1}drd\theta.\label{prestep_kappa}
\end{align}

Consider the case when $m=0$ and, hence, $\cL_e$ does not intersect the $x$-axis. From \eqref{phidef} we have that $\phi=\tfrac{\pi}{2}$. It then follows from \eqref{lspol} that $\hat{r}(\theta)=r_\circ\sin(\theta_\circ)\csc(\theta)$ and, so, applying this to \eqref{prestep_kappa} we find
\begin{equation}
	\ks(z,\cL,\bu)=\frac{1}{\sigma{}r_\circ^\sigma}\int_{\theta_1}^{\theta_2}\Big(\frac{\sin\theta}{\sin\theta_\circ}\Big)^{\sigma}d\theta\label{laststep_kappa_m0}.
\end{equation}
Furthermore, since $z$ is not on $\cL_e$ and $\cL\subseteq\cP_+$, we know $0\leq\theta_1<\theta_2<\pi$. From this we find that $\sin(\theta)>0$ and $\sin(\theta_\circ)>0$ and, hence, we are free to distribute the power of $\sigma$ in \eqref{laststep_kappa_m0}.  In doing so, we find
\begin{equation}
    \kappa_\sigma(z,\cL,\bu)=\frac{\csc^\sigma(\theta_\circ)}{\sigma{}r_\circ^\sigma}\int_{\theta_1}^{\theta_2}\sin^\sigma(\theta)d\theta.
\end{equation}
Lastly, we apply Lemma~\ref{lemma1} and use the notation in \eqref{f_upsilon_def} to conclude that
\begin{equation}\label{m0case}
    \kappa_\sigma(z,\cL,\bu)=\frac{\csc^\sigma(\theta_\circ)}{\sigma{}r_\circ^\sigma}(\Psi_{\sigma}(\cos\theta_1)-\Psi_{\sigma}(\cos\theta_2)).
\end{equation}
Furthermore, notice that \eqref{m0case} agrees with \eqref{prop1_statement} upon taking $r_\circ=r_m$ and noticing that $\theta_\perp=\tfrac{\pi}{2}$ in this case.

For the remainder of the proof, assume $m$ is nonzero. One may then observe that because $0\leq{}\theta_1\leq{}\theta_2\leq{}\pi$ and $\left|\phi\right|<\tfrac{\pi}{2}$, we have
\begin{equation}\label{TBOUNDS}
    -\tfrac{\pi}{2}<\theta+\phi<\tfrac{3\pi}{2},\qquad\theta\in[\theta_1,\theta_2].
\end{equation}
It follows from $\eqref{lspol}$ and the fact that $r_\circ>0$, that
\begin{align}
    &\sgn(\cos(\theta_\circ+\phi))=\sgn(\cos(\theta+\phi)),\qquad\theta\in[\theta_1,\theta_2].\label{sign_rel_thetaphi}
\end{align}

\noindent{}We then proceed with the evaluation of \eqref{prestep_kappa} to find that
\begin{equation}
    \ks(z,\cL,\bu) =\frac{1}{\sigma r_{\circ}^{\sigma}}\int_{\theta_1}^{\theta_2}\Big(\frac{\cos(\theta+\phi)}{\cos(\theta_\circ+\phi)}\Big)^{\sigma}d\theta.\label{apos_final}
\end{equation}
To proceed with the calculation, we consider two different cases.  Since $m\not=0$, $\cL_e$ intersects the $x$-axis. Denote this point of intersection by $(a,0)$.  The two cases are based on the sign of $a$.

First assume that $a>0$ and, for the moment, choose the polar point $(r_\circ,\theta_\circ)$ to represent $(a,0)$. In doing so we see that $\theta_\circ=0$. Since $|\phi|<\tfrac{\pi}{2}$, it follows that $\cos(\theta_\circ+\phi)=\cos(\phi)>0$. Hence, from \eqref{sign_rel_thetaphi}, we know that $\cos(\theta+\phi)>0$ for $\theta\in[\theta_1,\theta_2]$.  Thus, returning to $(r_\circ,\theta_\circ)$ being an arbitrary polar point on the extension of $\cL$, it follows that
\begin{equation}\label{eq50}
    \int_{\theta_1}^{\theta_2}\Big(\frac{\cos(\theta+\phi)}{\cos(\theta_\circ+\phi)}\Big)^{\sigma}d\theta=\int_{\theta_1}^{\theta_2}\frac{\cos^\sigma(\theta+\phi)}{\cos^\sigma(\theta_\circ+\phi)}d\theta.
\end{equation}
Moreover, $\cos(\theta+\phi)>0$ together with \eqref{TBOUNDS} implies that $|\theta+\phi|<\tfrac{\pi}{2}$ for $\theta\in[\theta_1,\theta_2]$. Thus, we can make use of Lemma~\ref{lemma1} to find that
\begin{align}
    \int_{\theta_1}^{\theta_2}\cos^\sigma(\theta+\phi)d\theta &= \int_{\theta_1+\phi}^{\theta_2+\phi}\cos^\sigma(\theta)d\theta\\ 
    &= \Big(\int_0^{\theta_2+\phi}-\int_0^{\theta_1+\phi}\Big)\cos^\sigma(\theta)d\theta\\
    &=\Psi_\sigma(\sin(\theta_2+\phi))-\Psi_\sigma(\sin(\theta_1+\phi)).\label{ageq0def}
\end{align}
Since $-\tfrac{1}{m}$ is the slope of the line perpendicular to $\cL_e$, we know that 
\begin{equation}\label{thetam_def_ageq0}
    \tan\theta_{\perp}=-\tfrac{1}{m}=\tan(-\phi).
\end{equation}
As $\cL_e$ intersects the positive $x$-axis, we can deduce that $|\theta_\perp|<\tfrac{\pi}{2}$.  Moreover, because $\phi$ is also in this range, it follows from \eqref{thetam_def_ageq0} that
\begin{equation}\label{thetapdefposa}
    \theta_\perp=-\phi.
\end{equation}
Putting together \eqref{apos_final}, \eqref{eq50}, \eqref{ageq0def}, and \eqref{thetapdefposa} and choosing $(r_\circ,\theta_\circ)$ to represent the midpoint of $\cL$, we obtain \eqref{prop1_statement}.

Now consider the case $a<0$. As before, temporarily choose the polar point $(r_\circ,\theta_\circ)$ to represent $(a,0)$.  Consequently, $\theta_\circ=\pi$ and, hence, $\cos(\theta_\circ+\phi)=-\cos(\phi)<0$ since $|\phi|<\tfrac{\pi}{2}$. It then follows from \eqref{sign_rel_thetaphi} that $\cos(\theta+\phi)<0$, which together with \eqref{TBOUNDS} yields $|\theta+\phi-\pi|<\tfrac{\pi}{2}$ for $\theta\in[\theta_1,\theta_2]$. Putting these results together with the fact that $\cos(x)=-\cos(x-\pi)$ for any $x$, we conclude that

\begin{equation}\label{septerm_aneg}
    \int_{\theta_1}^{\theta_2}\Big(\frac{\cos(\theta+\phi)}{\cos(\theta_\circ+\phi)}\Big)^{\sigma}d\theta=\int_{\theta_1}^{\theta_2}\frac{\cos^\sigma(\theta+\phi-\pi)}{\cos^\sigma(\theta_\circ+\phi-\pi)}d\theta.
\end{equation}
Thus, from \eqref{apos_final}, we have
\begin{equation}\label{prealeq0}
    \kappa_\sigma(z,\cL,\bu)=\frac{\sec^\sigma(\theta_\circ+\phi-\pi)}{\sigma r_\circ^\sigma}\int_{\theta_1+\phi-\pi}^{\theta_2+\phi-\pi}\cos^\sigma(\theta)d\theta.
\end{equation}
We now make use of Lemma~\ref{lemma1} to obtain
\begin{equation}\label{aneg_alm}
    \int_{\theta_1+\phi-\pi}^{\theta_2+\phi-\pi}\cos^\sigma(\theta)d\theta = \Psi_\sigma(\sin(\theta_2+\phi-\pi))-\Psi_\sigma(\sin(\theta_1+\phi-\pi)).
\end{equation}
Since the extension of $\cL$ intersects the negative $x$-axis, we deduce that $|\theta_\perp-\pi|<\tfrac{\pi}{2}$. We then notice that $|\phi|<\tfrac{\pi}{2}$, and since \eqref{thetam_def_ageq0} holds in the current case as well, we find that
\begin{equation}
    \theta_\perp=\pi-\phi.
\end{equation}
Using this result in \eqref{aneg_alm} and combining it with \eqref{prealeq0} results in \eqref{prop1_statement}.
\qedwhite{}
\end{proof}

\begin{remark}
It is possible to obtain a formula for $\theta_{\perp}$ in terms of the the coordinates of the midpoint $(x_m,y_m)$ and slope $m$ of $\cL$. Using Cartesian coordinates, we find that the ray with angle $\theta_\perp$ is given by the equation
\begin{equation}\label{crperp_cart}
    y=x\tan\theta_\perp=-\tfrac{x}{m}
\end{equation}
and the line $\cL_e$ is given by
\begin{equation}\label{midp_l_formula}
	y-y_m=m(x-x_m).
\end{equation}
Letting $(x_\circ,y_\circ)$ denote the intersection of theses two lines, one finds that
\begin{equation}\label{xprimeyprime}
    (x_0,y_0)=\Big(\frac{m(mx_m-y_m)}{m^2+1},\frac{y_m-mx_m}{m^2+1}\Big).
\end{equation}

\noindent{}Let $\arctantwo$ be the two-argument inverse tangent function, so that $\arctantwo(x,y)$ gives the angle in $(-\pi,\pi]$ associated with the point $(x,y)\in\Real^2$. Then, since $(x,y)\mapsto{}(y,-x)$ corresponds to a clockwise rotation of $\pi/2$ radians,
\begin{equation}
	\tfrac{\pi}{2}+\arctantwo(y,-x)
\end{equation}
yields the angle in $(-\pi/2,3\pi/2]$ assosciated with $(x,y)$. Therefore, since $\theta_\perp\in(-\pi/2,3\pi/2)$ is the angle associated with $(x_0,y_0)$, we have
\begin{equation}
    \theta_\perp=\tfrac{\pi}{2}+\arctantwo(y_0,-x_0).
\end{equation}
Then, with use of \eqref{xprimeyprime} and the fact that $m^2+1>0$, we conclude that
\begin{equation}\label{thetam_final}
    \theta_\perp=\tfrac{\pi}{2}+\arctantwo(y_m-mx_m, m(y_m-mx_m)).
\end{equation}

\end{remark}

\end{proof}

\section{An approximation of nonlocal curvature using splines}\label{section:general_evaluation}
Recall that a linear interpolating spline of a connected curve $\cC$ with a parameterization $\gamma:[\alpha,\beta]\to{}\Real^2$ and uniform partition $P=\{\alpha+\tfrac{\beta-\alpha}{n}k\ |\ k=0,1,\cdots{},n\}$ is given by
\begin{equation}
	\cI_n(\cC)=\bigcup_{k=1}^{n}\Big[\gamma\big(\alpha+\tfrac{\beta-\alpha}{n}(k-1)\big),\gamma\big(\alpha+\tfrac{\beta-\alpha}{n}k\big)\Big].
\end{equation}
For $\cC$ potentially not connected, a partial linear interpolating spline of $\cC$ is a linear interpolating spline $\cS$ for some connected subcurve of $\cC$.

\begin{theorem}\label{lemma3}
    Given a $C^{1,a}$ curve $\cC$ with $a>\sigma$, point $z$, unit vector $\bu$, and $\ve>0$, there are a countable number of partial linear interpolating splines $\{\cS_i\ |\ i\in\Nat\}$ of $\cC$ based on disjoint subcurves $\cC_i\subseteq\cC$, $i\in\Nat$, such that
    \begin{equation}\label{lemma3state}
        |\ks(z,\cC,\bu)-\ks(z,\cup_{i\in\Nat}\cS_i,\bu)|<\ve.
    \end{equation}
\end{theorem}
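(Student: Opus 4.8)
The plan is to reduce the approximation of $\kappa_\sigma(z,\cC,\bu)$ to approximating the nonlocal curvature of each radial, single-intersection piece of $\cC$ by the curvature of a fine linear interpolating spline of that piece, and then to sum up the error. By Proposition~\ref{skpm} we may split $\cC$ into its portions in $\operatorname{Int}(\cP_+)$ and $\operatorname{Int}(\cP_-)$ and treat each separately, so assume $\cC\subseteq\cP_+$. By Proposition~\ref{partit_thm}, equation~\eqref{fin_intersection_approx}, there is an $N$ such that $|\kappa_\sigma(z,\cC,\bu)-\kappa_\sigma(z,\widetilde{\cC}_N,\bu)|<\ve/3$, where $\widetilde{\cC}_N=\bigcup_{n=1}^N\cC_n$; and by \eqref{partit_thm_statement} it suffices to approximate each $\kappa_\sigma(z,\cC_n,\bu)$ for $n\le N$. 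Each $\cC_n$ is radial with respect to $z$ by construction (every ray from $z$ meets $\cC_n$ at most once, since $\cC_n$ collects the points whose radial segment to $z$ hits $\cC$ exactly $n$ times and each such point is "the $n$th along its ray"). Moreover $\cC_n$ is a finite or countable union of connected subarcs; using Proposition~\ref{disj_thm} we may further decompose each $\cC_n$ into connected pieces and, since $\cC$ has finite length, keep only finitely many connected subarcs while controlling the tail by a length bound of the type established in \eqref{dif_almost_fin_bound}. So the whole problem comes down to: given a connected $C^{1,a}$ subarc $D\subseteq\cP_+$ that is radial w.r.t.\ $z$ and bounded away from $z$, approximate $\kappa_\sigma(z,D,\bu)$ by $\kappa_\sigma(z,\cI_n(D),\bu)$ for $n$ large.

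For that core estimate I would use the representation from Proposition~\ref{ksrep}, $\kappa_\sigma(z,D,\bu)=\int_{\cA_i(z,D)\cap\cP_+}|z-y|^{-2-\sigma}dy$, write it in polar coordinates about $z$ as $\int_I \tfrac{1}{\sigma}\hat r(\theta)^{-\sigma}d\theta$ where $\hat r$ is the radial graph of $D$, and compare with the analogous expression for the spline $\cI_n(D)$, whose radial graph $\hat r_n$ is a piecewise-"projective" interpolant of $\hat r$ (each chord is, in polar coordinates about $z$, of the form \eqref{lspol}). The key geometric input is: because $D$ is $C^{1,a}$ with $a>\sigma$ and stays a fixed distance $d_0>0$ from $z$, the chord endpoints $\gamma(t_{k-1}),\gamma(t_k)$ satisfy $|\gamma(t)-\gamma_{\mathrm{chord}}(t)|\le C\,h^{1+a}$ where $h$ is the mesh size, and this transfers to a uniform bound $\|\hat r-\hat r_n\|_\infty\le C'h^{1+a}$ on the radial graphs, hence to $|\hat r^{-\sigma}-\hat r_n^{-\sigma}|\le C''h^{1+a}$. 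Integrating over the finitely many angle-intervals $I$ (whose total measure is at most $2\pi$) gives $|\kappa_\sigma(z,D,\bu)-\kappa_\sigma(z,\cI_n(D),\bu)|\le C'''h^{1+a}\to0$. One must also check that $\cI_n(D)$ is itself radial (true for $n$ large, since the chords stay close to $D$ and the tangent directions converge uniformly, so no chord doubles back across a ray from $z$) so that Proposition~\ref{ksrep} applies to the spline as well; this is where $C^1$ regularity of $\cC$ is used. Finally, choosing $n$ large enough that each of the $N$ core errors and the discarded-tail length contributions is below $\ve/(3N)$-ish, and reassembling via \eqref{partit_thm_statement} and Proposition~\ref{skpm}, yields \eqref{lemma3state}; collecting the splines of all the retained connected pieces across all $\cC_n$, $n\le N$, and across $\cC_+$ and $\cC_-$ gives the countable family $\{\cS_i\}$ on disjoint subcurves $\{\cC_i\}$.

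The main obstacle I anticipate is the bookkeeping that makes the two "different $n$" notions ($n$ = intersection count in Proposition~\ref{partit_thm}, and $n$ = mesh parameter of the spline) and the double decomposition (half-planes, then radial pieces, then connected components, then finite truncation) all fit together with a single clean error budget. In particular one must be careful that the splines $\cI_{n}(\cC_k)$ of the radial pieces remain pairwise "compatible" enough that $\kappa_\sigma$ of their union decomposes as a signed sum matching \eqref{partit_thm_statement}, or alternatively organize the argument so that one only ever applies \eqref{partit_thm_statement} to $\cC$ itself and to the final spline union $\bigcup_i\cS_i$, and proves directly that the latter's radial pieces are exactly $\ve$-close to those of $\cC$. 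A secondary technical point is justifying the uniform radial-graph estimate $\|\hat r-\hat r_n\|_\infty\lesssim h^{1+a}$ near points where $D$ is nearly radial (tangent nearly along the ray from $z$), where the map from arclength to polar angle can degenerate; this is handled by working with the Cartesian chord bound $|\gamma-\gamma_{\mathrm{chord}}|\lesssim h^{1+a}$ first and only then passing to polar coordinates, using $|z-y|\ge d_0$ throughout, rather than estimating $\hat r$ directly.
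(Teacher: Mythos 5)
Your outline reproduces the paper's reduction machinery (Proposition~\ref{skpm}, then Proposition~\ref{partit_thm} with a finite truncation, then Proposition~\ref{disj_thm} on connected pieces, then a polar-coordinate comparison of $\hat r^{-\sigma}$ with the spline's radial graph), but it has a genuine gap: you never handle the case $z\in\cC$. Your entire core estimate is stated for a subarc ``radial w.r.t.\ $z$ and bounded away from $z$,'' and no step of your reduction justifies that lower bound on the distance. When $z\in\cC$ the pieces of $\cC_+$, $\cC_-$ and of the radial components accumulate at $z$, the integrand $|z-y|^{-2-\sigma}$ is singular there, and the nonlocal curvature is only a principal value; you cannot discard the near-$z$ part, nor control it by a length bound of the type \eqref{dif_almost_fin_bound}, since that bound degenerates as the distance $d_N\to 0$. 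This is exactly what Part 2 of the paper's proof supplies: cutting out $\cC_\rho=\cC\cap B_\rho(z)$, writing it as a graph with $|f_\rho(x)|\le C|x|^{1+a}$, and showing $|\ks(z,\cC_\rho,\bu)|\le \tfrac{2Ca}{\sigma(a-\sigma)}\rho^{a-\sigma}$. Note that this is the only place the hypothesis $a>\sigma$ is actually needed; in your argument $a>\sigma$ is never used (your chord error $h^{1+a}$ would work for any $a>0$), which is a sign the singular case has been skipped.

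A secondary problem is your claimed uniform rate $\|\hat r-\hat r_n\|_\infty\lesssim h^{1+a}$. A radial curve may be tangent to a ray from $z$, and near such a point a Cartesian perturbation of size $\delta$ can displace the radial graph at a fixed angle by roughly $\delta/\sin\eta$, where $\eta$ is the (arbitrarily small) angle between the curve and the ray; so passing from the Cartesian chord bound to a pointwise-in-$\theta$ bound does not work as stated. The rate is also unnecessary: the paper's Part 1 only uses uniform convergence $\hat r_n\to\hat r$ together with uniform continuity of $\hat r^{-\sigma}$ and the fact that $\hat r_n(\theta_k)=\hat r(\theta_k)$ at the partition angles, which gives the $\ve$-estimate without any H\"older rate. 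If you repair the near-$z$ analysis along the lines of the paper's Part 2 and replace your quantitative radial-graph bound by the uniform-continuity argument (or restrict the rate claim to subarcs uniformly transversal to rays), the rest of your bookkeeping—radiality of each $\cC_n$, the finite truncations, and the reassembly via \eqref{partit_thm_statement}—matches the paper's proof.
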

\begin{proof}
	The proof begins by establishing the result when $\cC$ satisfies a number of conditions. In subsequent steps, these assumptions are removed by using Propositions~\ref{ksrep}--\ref{partit_thm}. 
	\hypertarget{step1thm1}{}	
	
	\noindent{}\textbf{(Part 1)}
	Suppose $z\notin{}\cC$, $\cC$ is radial relative to $z$, $\cC\subseteq\cP_+$, and that $\cC$ is connected. Find a function $\hat r:[\alpha,\beta]\to{}(0,\infty)$ such that
	\begin{equation}
		\cC=\{z+(\hat r(\theta)\cos\theta,\hat r(\theta)\sin\theta)\ |\ \theta\in[\alpha,\beta]\}.
	\end{equation}
	It then follows from the definition of $\cA_i(z)$ that
	\begin{equation}
		\cA_i(z)\cap\cP_+\cong\{z+(r\cos\theta,r\sin\theta)\ |\ r\geq{}\hat r(\theta),\ \theta\in[\alpha,\beta]\}.
	\end{equation}
	Using this, we observe that \eqref{ksrepi} implies that
	\begin{align}
		\kappa_\sigma(z,\cC,\bu) &= \int_{\alpha}^{\beta}\int_{\hat r(\theta)}^{\infty}r^{-\sigma-1}drd\theta\\
		&= \frac{1}{\sigma}\int_{\alpha}^{\beta}\hat r(\theta)^{-\sigma}d\theta\label{ksc}.
	\end{align}
	Fix a natural number $n$, and construct the uniform partition $\alpha=\theta_0<\theta_1<\cdots{}<\theta_n=\beta$ of width $\delta_n=(\beta-\alpha)/n$. Let $\cI_n(\cC)$ be the linear interpolating spline of $\cC$ formed by this partition, so that
	\begin{equation}
		\cI_n(\cC)=\bigcup_{k=1}^{n}\Big[\big(\hat{r}(\theta_{k-1})\cos\theta_{k-1},\hat{r}(\theta_{k-1})\sin\theta_{k-1}\big),\big(\hat{r}(\theta_k)\cos\theta_k,\hat{r}(\theta_k)\sin\theta_k\big)\Big],
	\end{equation}
	and find $\hat r_n:[\alpha,\beta]\to{}(0,\infty)$ so that
	\begin{equation}
		\cI_n(\cC)=\{z+(\hat r_n(\theta)\cos\theta,\hat r_n(\theta)\sin\theta)\ |\ \theta\in[\alpha,\beta]\}.
	\end{equation}
	Observe that $\hat r_n^{-\sigma}(\theta_i)=\hat r^{-\sigma}(\theta_i)$ for $i=0,1,\dots{},n$. Similar to the previous argumentation that yielded \eqref{ksc}, we have
	\begin{equation}\label{kscn}
		\kappa_\sigma(z,\cI_n(\cC),\bu)=\frac{1}{\sigma}\int_{\alpha}^{\beta}\hat r_n^{-\sigma}(\theta)d\theta.
	\end{equation}
	Using \eqref{ksc} and \eqref{kscn}, we find from the triangle inequality that
	\begin{align}
		|\ks(z,\cC,\bu)-\ks(z,\cI_n(\cC),\bu)| &= \frac{1}{\sigma}\Big|\int_{\alpha}^{\beta}\big(\hat r^{-\sigma}(\theta)-\hat r^{-\sigma}(\theta_k)\big)d\theta-\int_{\alpha}^{\beta}\big(\hat r_n^{-\sigma}(\theta)-\hat r_n^{-\sigma}(\theta_k)\big)d\theta\Big|\\
		&\leq{} \frac{1}{\sigma}\sum_{k=1}^{n}\int_{\theta_{k-1}}^{\theta_k}\big(|\hat r^{-\sigma}(\theta)-\hat r^{-\sigma}(\theta_k)|+|\hat r_n^{-\sigma}(\theta)-\hat r_n^{-\sigma}(\theta_k)|\big)d\theta\label{sum_diff_eq}.
	\end{align}
	Since $\hat r^{-\sigma}$ is uniformly continuous, there is a $\delta>0$ such that for any $\theta,\phi\in[\alpha,\beta]$ with $|\theta-\phi|<\delta$ we have
	\begin{equation}\label{rem2_s3}
	|\hat{r}^{-\sigma}(\theta)-\hat{r}^{-\sigma}(\phi)|<\tfrac{\ve\sigma}{3(\beta-\alpha)}.
\end{equation}
Now, set
	\begin{equation}
		M\coloneqq{}\tfrac{1}{2}\min_{\alpha\leq{}\varphi\leq{}\beta}\hat{r}(\varphi).
	\end{equation}
Following from the fact that $\hat{r}_n$ converges uniformly to $\hat{r}$, see Ahlberg, Nilson, and Walsh \cite{Ahlberg}, we may choose $n$ large enough such that
\begin{equation}\label{min_rphi_lbl}
	0<M<\min_{\alpha\leq{}\varphi\leq{}\beta}\hat{r}_n(\varphi)
\end{equation}
and
\begin{equation}\label{rem2_s4}
	|\hat{r}_{n}(\psi)-\hat{r}(\psi)|<\tfrac{\ve\sigma}{6(\beta-\alpha)}M^{\sigma+1},\qquad{}\psi\in[\alpha,\beta].
\end{equation}
Using the function $h(x)=x^{-\sigma}$ for $x\geq{}M$ and the mean-value theorem, we know that for any $\varphi\in[\alpha,\beta]$, there is a $c_\varphi$ between $\hat{r}(\varphi)$ and $\hat{r}_n(\varphi)$ such that
\begin{equation}
	|\hat{r}_{n}^{-\sigma}(\varphi)-\hat{r}^{-\sigma}(\varphi)| \leq{} |h'(c_\varphi)||\hat{r}_{n}(\varphi)-\hat{r}(\varphi)|.
\end{equation}
Then, we observe that
\begin{equation}
	|h'(c_\varphi)|=\sigma|c_\varphi^{-\sigma-1}|\leq{}M^{-\sigma-1}
\end{equation}
and, so,
\begin{equation}\label{boundphi_var}
	|\hat{r}_{n}^{-\sigma}(\varphi)-\hat{r}^{-\sigma}(\varphi)|\leq{}M^{-\sigma-1}|\hat{r}_{n}(\varphi)-\hat{r}(\varphi)|.
\end{equation}
We then notice that by the triangle inequality, 
\begin{equation}\label{rem2_s1}
	|\hat{r}_{n}^{-\sigma}(\theta)-\hat{r}_{n}^{-\sigma}(\phi)|\leq{}
	|\hat{r}_{n}^{-\sigma}(\theta)-\hat{r}^{-\sigma}(\theta)|
	+|\hat{r}^{-\sigma}(\theta)-\hat{r}^{-\sigma}(\phi)|
	+|\hat{r}^{-\sigma}(\phi)-\hat{r}_{n}^{-\sigma}(\phi)|.
\end{equation}
Applying \eqref{boundphi_var} to \eqref{rem2_s1}, we arrive at
\begin{equation}\label{rem2_s2}
	|\hat{r}_{n}^{-\sigma}(\theta)-\hat{r}_{n}^{-\sigma}(\phi)|\leq{}|\hat{r}^{-\sigma}(\theta)-\hat{r}^{-\sigma}(\phi)|+M^{-\sigma-1}\big(|\hat{r}_{n}(\theta)-\hat{r}(\theta)|+|\hat{r}_{n}(\phi)-\hat{r}(\phi)|\big).
\end{equation}
Now, using both \eqref{rem2_s3} and \eqref{rem2_s4} in \eqref{rem2_s2} yields
\begin{equation}\label{rn_rn_bnd}
	|\hat{r}_{n}^{-\sigma}(\theta)-\hat{r}_{n}^{-\sigma}(\phi)|\leq{}\tfrac{2\ve\sigma}{3(\beta-\alpha)}\qquad \text{when}\ |\theta-\phi|<\delta.
\end{equation}
We then choose a potentially larger $n$ so that the width of our partition $\delta_n<\delta$. Making use of \eqref{rem2_s3} and \eqref{rn_rn_bnd}, we find that for any natural number $k\in[1,n]$	\begin{equation}
		|\hat r^{-\sigma}(\theta)-\hat r^{-\sigma}(\theta_k)|+|\hat r_n^{-\sigma}(\theta)-\hat r_n^{-\sigma}(\theta_k)|<\tfrac{\ve\sigma}{\beta-\alpha},\qquad{}\theta\in[\theta_{k-1},\theta_k].
	\end{equation}
	Applying this previous inequality to \eqref{sum_diff_eq}, we conclude that
	\begin{equation}
		|\ks(z,\cC,\bu)-\ks(z,\cI_n(\cC),\bu)|<\frac{1}{\sigma}\sum_{k=1}^{n}\int_{\theta_{k-1}}^{\theta_k}\frac{\ve\sigma}{\beta-\alpha}d\theta=\ve.
	\end{equation}
	\vspace{0.1in}
	
	\hypertarget{step2thm1}{}
	\noindent{}\textbf{(Part 2)} Now suppose $z\in{}\cC$, $\cC$ is connected, $\cC$ is radial relative to $z$, and $\cC\subseteq{}\cP_+$. Consider an arbitrary $\rho>0$, and let $\cC_\rho=\cC\cap{}B_\rho(z)$. Notice that for sufficiently small $\rho$, $\cC\setminus{}\cC_\rho$ is comprised of two disjoint curves, say $\cC_1$ and $\cC_2$. Observe $z\notin{}\cC_1$ and $z\notin{}\cC_2$. This allows us to apply Part~\hyperlink{step1thm1}{1} to obtain linear interpolating splines $\cI_1$ and $\cI_2$ of $\cC_1$ and $\cC_2$, respectively, such that
	\begin{equation}\label{lt_eps_ov_3}
    	|\ks(z,\cC_1,\bu)-\ks(z,\cI_1,\bu)|<\tfrac{\ve}{3}\quad{}\text{and}\quad{}|\ks(z,\cC_2,\bu)-\ks(z,\cI_2,\bu)|<\tfrac{\ve}{3}.
    \end{equation}
    By Proposition~\ref{disj_thm} we know that
    \begin{equation}
    	\ks(z,\cC,\bu)-\ks(z,\cC\setminus{}\cC_\rho,\bu)=\ks(z,\cC_\rho,\bu).
    \end{equation}
    Using this equation, the triangle inequality, and Proposition~\ref{disj_thm} again, we then find that
    \begin{align}
    	|\ks(z,\cC,\bu)-\ks(z,\cI_1\cup{}\cI_2,\bu)| &\leq{} |\ks(z,\cC,\bu)-\ks(z,\cC\setminus{}\cC_\rho,\bu)|\nonumber{}\\
		&\hspace{0.162in}+ |\ks(z,\cC\setminus{}\cC_\rho,\bu)-\ks(z,\cI_1\cup{}\cI_2,\bu)|\\
		&\leq{}|\ks(z,\cC_\rho,\bu)|+|\ks(z,\cC_1,\bu)-\ks(z,\cI_1,\bu)|\nonumber{}\\
		&\hspace{0.162in}+ |\ks(z,\cC_2,\bu)-\ks(z,\cI_2,\bu)|\label{finalbound_split_lr}.
    \end{align}
	
    We now examine the first term in \eqref{finalbound_split_lr}. After applying a change of variables so that $\bu$ is aligned with the positive $y$-axis, one can find $x_L\leq{}0\leq{}x_R$ and a $C^{1,a}$ function $f_\rho:[x_L,x_R]\to{}\Real$ such that
    \begin{equation}\label{c_rho_def}
    	\cC_\rho=\{(x,f_\rho(x))\ |\ x_L\leq{}x\leq{}x_R\}.
	\end{equation}
    Since $z\in \cC$, we must have $\bu$ orthogonal to $\cC$ at $z$. It follows that $f_\rho(0)=f_\rho'(0)=0$. Moreover, since we are assuming that $\cC\subseteq\cP_+$, we have $f_\rho(x)\geq{}0$ for all $x\in[x_L,x_R]$.

    \begin{figure}[h]
    	\centering%
    	\includegraphics[height=6cm]{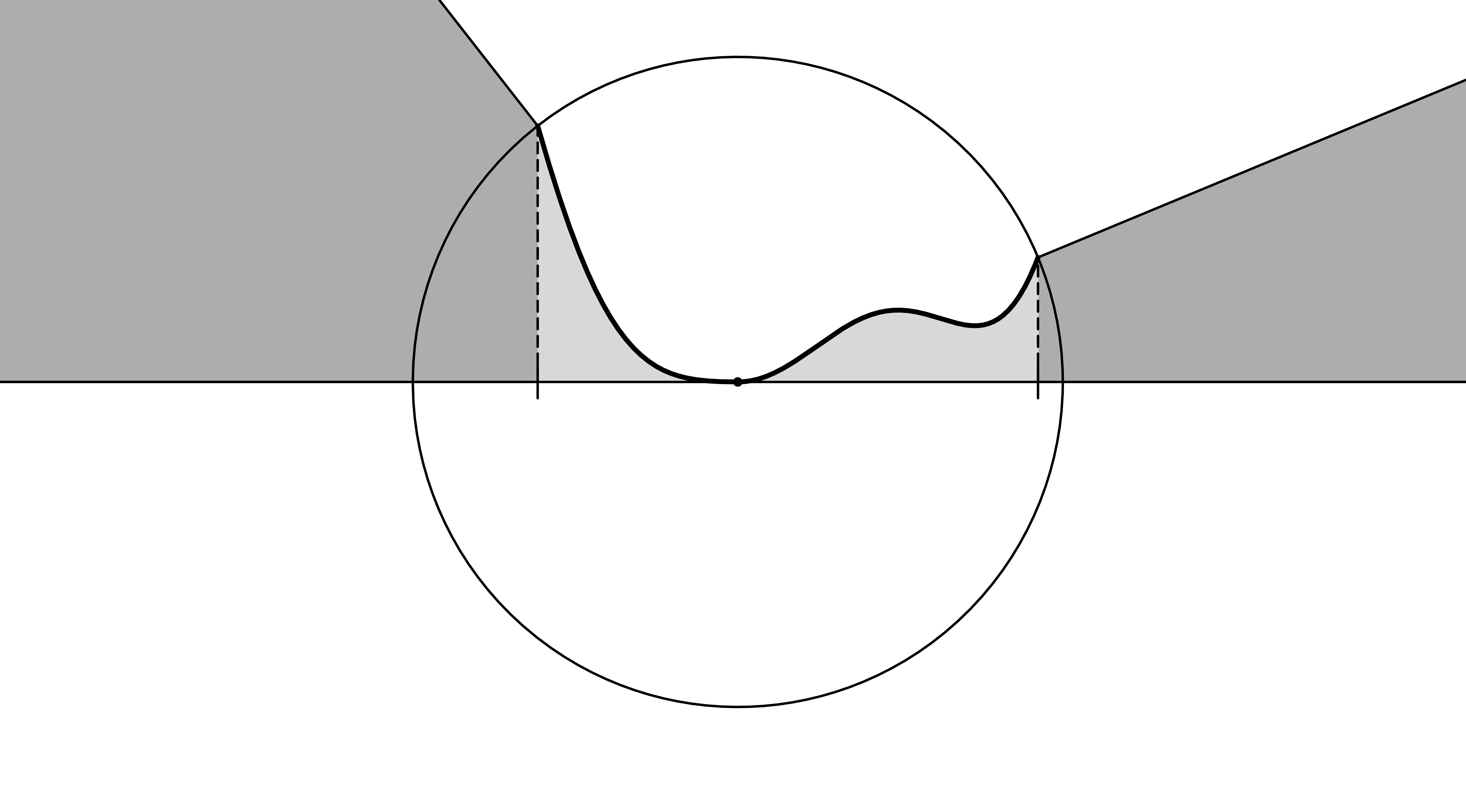}
    	\put(-199,79){$x_L$}
    	\put(-182,118){$\cC_\rho$}
    	\put(-155.5,82){$z$}
		\put(-98,80){$x_R$}
       	\caption{Depiction of $\cC_\rho$ along with $z$, where the leftmost dark gray region, the center light gray region, and the rightmost dark gray region denote the regions of the first, second, and third double integrals on the right-hand side of \eqref{threeint}, respectively.}
    	\label{fig:rho}
	\end{figure}
	Using $f_\rho$ and \eqref{ksrepi}, we obtain	
    \begin{equation}\label{threeint}
    	|\ks(z,\cC_\rho,\bu)|=\Big(\int_{-\infty}^{x_L}\int_{0}^{\tfrac{f_\rho(x_L)}{x_L}x}+\int_{x_L}^{x_R}\int_{0}^{f_\rho(x)}+\int_{x_R}^{\infty}\int_{0}^{\tfrac{f_\rho(x_R)}{x_R}x}\Big)(x^2+y^2)^{-(\sigma+2)/2}dydx.
    \end{equation}
    See Figure~\ref{fig:rho}. By the properties of $f_\rho$, Taylor's theorem implies that there exists a $C>0$ such that
    \begin{equation}\label{fbound_taylor}
    	|f_\rho(x)|\leq{}C|x|^{1+a},\qquad{}x\in{}[x_L,x_R].
    \end{equation}
    Examining the second term on the right-hand side of \eqref{threeint}, we have   
    \begin{align}
    	\int_{x_L}^{x_R}\int_{0}^{f_\rho(x)}(x^2+y^2)^{-(\sigma+2)/2}dydx &\leq{} \int_{x_L}^{x_R}f_\rho(x)|x|^{-\sigma-2}dx\\
	&\leq{} \int_{x_L}^{x_R}C|x|^{a-\sigma-1}dx\\
	&= \tfrac{C}{a-\sigma}\big(x_R^{a-\sigma}+(-x_L)^{a-\sigma}\big)\label{firstterm0}.
    \end{align}
    Similarly, we obtain
    \begin{align}
    	\int_{-\infty}^{x_L}\int_{0}^{\tfrac{f_\rho(x_L)}{x_L}x}(x^2+y^2)^{-(\sigma+2)/2}dydx &\leq{} \int_{-\infty}^{x_L}\tfrac{f_\rho(x_L)}{-x_L}|x|^{-\sigma-1}dx\\
	&= \frac{f_\rho(x_L)}{\sigma(-x_L)^{\sigma+1}}\leq{}\frac{C}{\sigma}(-x_L)^{a-\sigma}\label{f_rho_l_bound},
    \end{align}
    and
    \begin{align}
    	\int_{x_R}^{\infty}\int_{0}^{\tfrac{f_\rho(x_R)}{x_R}x}(x^2+y^2)^{-(\sigma+2)/2}dydx &\leq{} \int_{x_R}^{\infty}\tfrac{f_\rho(x_R)}{x_R}|x|^{-\sigma-1}dx\\
	&= \frac{f_\rho(x_R)}{\sigma x_R^{\sigma+1}} \leq{} \frac{C}{\sigma}x_R^{a-\sigma}\label{f_rho_r_bound}. 
    \end{align}
	Using \eqref{firstterm0}, \eqref{f_rho_l_bound}, and \eqref{f_rho_r_bound} in \eqref{threeint} yields
    \begin{align}
    	|\ks(z,\cC_\rho,\bu)| &\leq{} \tfrac{Ca}{\sigma(a-\sigma)}\big(x_R^{a-\sigma}+(-x_L)^{a-\sigma}\big)\\
	&\leq{}\tfrac{2Ca}{\sigma(a-\sigma)}\rho^{a-\sigma}.\label{final_lim_fin}
	\end{align}
    Thus, we may choose $\rho$ small enough so that
    \begin{equation}\label{bound_cc_rho}
    	|\ks(z,\cC_\rho,\bu)|<\tfrac{\ve}{3}.
    \end{equation}	
    Applying both this and \eqref{lt_eps_ov_3} to \eqref{finalbound_split_lr}, we obtain the desired result in this case.
    \vspace{0.1in}
	\hypertarget{step3thm1}{}
	
	\noindent{}\textbf{(Part 3)}
	Now, suppose that $\cC\subseteq{}\cP_+$ and $\cC$ is radial relative to $z$. We can write $\cC=\bigcup_{k\in\Nat}\cC_n$, where $\cC_n$, $n\in\Nat$, denote the connected components of $\cC$, with potentially some $\cC_k=\emptyset$. Following from Proposition~\ref{disj_thm}, we may find $N\in\Nat$ such that
	\begin{equation}\label{tail_lim_0}
		\big|\sum_{n=N+1}^{\infty}\ks(z,\cC_n,\bu)\big|<\frac{\ve}{2}.
	\end{equation}
	Then from Parts \hyperlink{step1thm1}{1} and \hyperlink{step2thm1}{2}, for every natural number $n\in[1,N]$, we can find a linear interpolating spline $\cI_n$ of $\cC_n$ such that
	\begin{equation}\label{jn_bound}
		|\ks(z,\cC_n,\bu)-\ks(z,\cI_n,\bu)|<\tfrac{\ve}{2N}.
	\end{equation}
	We then take $\widetilde{\cI}_N=\bigcup_{n=1}^{N}\cI_n$, and observe by Proposition~\ref{disj_thm} and the triangle inequality that
	\begin{align}
		|\ks(z,\cC,\bu)-\ks(z,\widetilde{\cI}_N,\bu)| &= \big|\sum_{n=1}^{\infty}\ks(z,\cC_n,\bu)-\sum_{n=1}^{N}\ks(z,\cI_n,\bu)\big|\\
		&\leq{}\sum_{n=1}^{N}\big|\ks(z,\cC_n,\bu)-\ks(z,\cI_n,\bu)\big|+\big|\sum_{n=N+1}^{\infty}\ks(z,\cC_n,\bu)\big|\label{s3_finisher2}.
	\end{align}
	Applying both \eqref{tail_lim_0} and \eqref{jn_bound} to \eqref{s3_finisher2} yields the desired result. A similar argument holds if $\cC\subseteq{}\cP_-$, and so the proof in this case will be omitted.
	    
    \vspace{0.1in}
	\hypertarget{step4thm1}{}
	\noindent{}\textbf{(Part 4)} 
	Now, suppose only that $\cC$ is radial relative to $z$. Let $\cC_+=\cC\cap{}\text{Int}(\cP_+)$ and $\cC_-=\cC\cap{}\text{Int}(\cP_-)$. Following from Part~\hyperlink{step3thm1}{3}, there are partial linear interpolating splines $\cI_+$ and $\cI_-$ of $\cC_+$ and $\cC_-$, respectively, such that
	\begin{equation}\label{pn_bound}
		|\ks(z,\cC_+,\bu)-\ks(z,\cI_+,\bu)|<\tfrac{\ve}{2}\quad\text{ and }\quad|\ks(z,\cC_-,\bu)-\ks(z,\cI_-,\bu)|<\tfrac{\ve}{2}.
	\end{equation}
	Taking $\cI$ to be the union of $\cI_+$ and $\cI_-$, we notice from Proposition~\ref{skpm} and the triangle inequality that
	\begin{equation}
		|\ks(z,\cC,\bu)-\ks(z,\cI,\bu)|\leq{}|\ks(z,\cC_+,\bu)-\ks(z,\cI_+,\bu)|+|\ks(z,\cC_-,\bu)-\ks(z,\cI_-,\bu)|.
	\end{equation}
	Combining this with \eqref{pn_bound}, we obtain the desired result.
	
	\vspace{0.1in}
	\hypertarget{step5thm1}{}
	\noindent{}\textbf{(Part 5)}
	Lastly, suppose that $\cC$ is any $C^{1,a}$ curve with $a>\sigma$. Let $\cC=\bigcup_{n\in\Nat}\cC_n$, where $\cC_n$ is defined as in \eqref{Cn}. Potentially $\cC_n$ is empty for large $n$. Following from Proposition~\ref{partit_thm} we may find some $N\in\Nat$ such that
	\begin{equation}\label{s5_finisher1}
		\big|\sum_{n=N+1}^{\infty}(-1)^{n+1}\ks(z,\cC_n,\bu)\big|<\frac{\ve}{2}.
	\end{equation}
	Then, from Part~\hyperlink{step4thm1}{4} it follows that for every natural number $n\in[1,N]$ there is a partial linear interpolating spline $\cI_n$ of $\cC_n$ such that
	\begin{equation}\label{s5_finisher2}
		|\ks(z,\cC_n,\bu)-\ks(z,\cI_n,\bu)|<\tfrac{\ve}{2N}.
	\end{equation}
	We then take $\widetilde{\cI}_n=\bigcup_{k=1}^{N}\cI_n$. From Proposition~\ref{partit_thm} and the triangle inequality we obtain
	\begin{align}
		|\ks(z,\cC,\bu)-\ks(z,\widetilde{\cI}_n,\bu)| &= \big|\sum_{n=1}^{\infty}(-1)^{n+1}\ks(z,\cC_n,\bu)-\sum_{n=1}^{N}(-1)^{n+1}\ks(z,\cI_n,\bu)\big|\\
		&\leq{}\sum_{n=1}^{N}|\ks(z,\cC_n,\bu)-\ks(z,\cI_n,\bu)|+\big|\sum_{n=N+1}^{\infty}\ks(z,\cC_n,\bu)\big|\label{s5_finisher}
	\end{align}
	Applying both \eqref{s5_finisher1} and \eqref{s5_finisher2} to \eqref{s5_finisher}, yields \eqref{lemma3state}.\qedwhite{}
\end{proof}

\section{Summary and discussion}\label{section:summary}

We now describe how the previous results can be combined to provide a method of approximating the nonlocal curvature of a $C^{1,a}$ curve $\cC$, with $a>\sigma$, at any point $z$ relative to a unit vector $\bu$. Namely, this is accomplished by going through the following steps.

\begin{itemize}
	\hypertarget{step1sec5}{}
	\item[\bf{(Step 1)}] If $z\in{}\cC$, choose a small $\rho>0$ and execute the following steps for the curve $\cC\setminus{}B_\rho(z)$.
	\hypertarget{step2sec5}{}
	\item[\bf{(Step 2)}] Use Proposition~\ref{partit_thm} to express the nonlocal curvature of the curve in terms of the nonlocal curvature of the curve's radial components $\cC_n$.
	\hypertarget{step3sec5}{}
	\item[\bf{(Step 3)}] Use Proposition~\ref{skpm} to write the nonlocal curvature of each radial component $\cC_n$ as the sum of the nonlocal curvature of $\cC_{n,+}=\cC_n\cap \text{\rm Int}(\cP_+)$ and $\cC_{n,-}=\cC_n\cap \text{\rm Int}(\cP_+)$.
	\hypertarget{step4sec5}{}
	\item[\bf{(Step 4)}] Use Proposition~\ref{disj_thm} to write the nonlocal curvature of each $\cC_{n,+}$ and $\cC_{n,-}$ as the sum of the nonlocal curvatures of their respective disjoint components $\cC_{n,+}^i$ and $\cC_{n,-}^j$, where $i$ and $j$ range the number of connected components of $\cC_{n,+}$ and $\cC_{n,-}$, respectively.
	\hypertarget{step5sec5}{}
	\item[\bf{(Step 5)}] Choose a suitably large $N\in\Nat$ and using the argument in Part~\hyperlink{step1thm1}{1} of Theorem~\ref{lemma3}, construct $N$-point linear interpolating splines $\cS_{n,+}^{i}$ and $\cS_{n,-}^{j}$ for every $\cC_{n,+}^{i}$ and $\cC_{n,-}^{j}$, respectively.
	\hypertarget{step6sec5}{}
	\item[\bf{(Step 6)}] Making use of Proposition~\ref{disj_thm} and Lemma~\ref{nlc_2p_ldef}, calculate the nonlocal curvature of each $\cS_{n,+}^{i}$ and $\cS_{n,-}^{j}$ for every $\cC_{n,+}^{i}$ and $\cC_{n,-}^{j}$, respectively.
\end{itemize}

Theorem~\ref{lemma3} ensures that the above procedure allows for the approximation of the nonlocal curvature of $\cC$ relative to $z$ to any degree of closeness. To do so, one may increase the number of interpolated points $N$ mentioned in Step 5 when constructing the linear interpolating splines, and additionally decrease $\rho$ mentioned in Step 1 if $z\in{}\cC$.

\begin{figure}[h]
    \centering%
    \includegraphics[height=5.6cm]{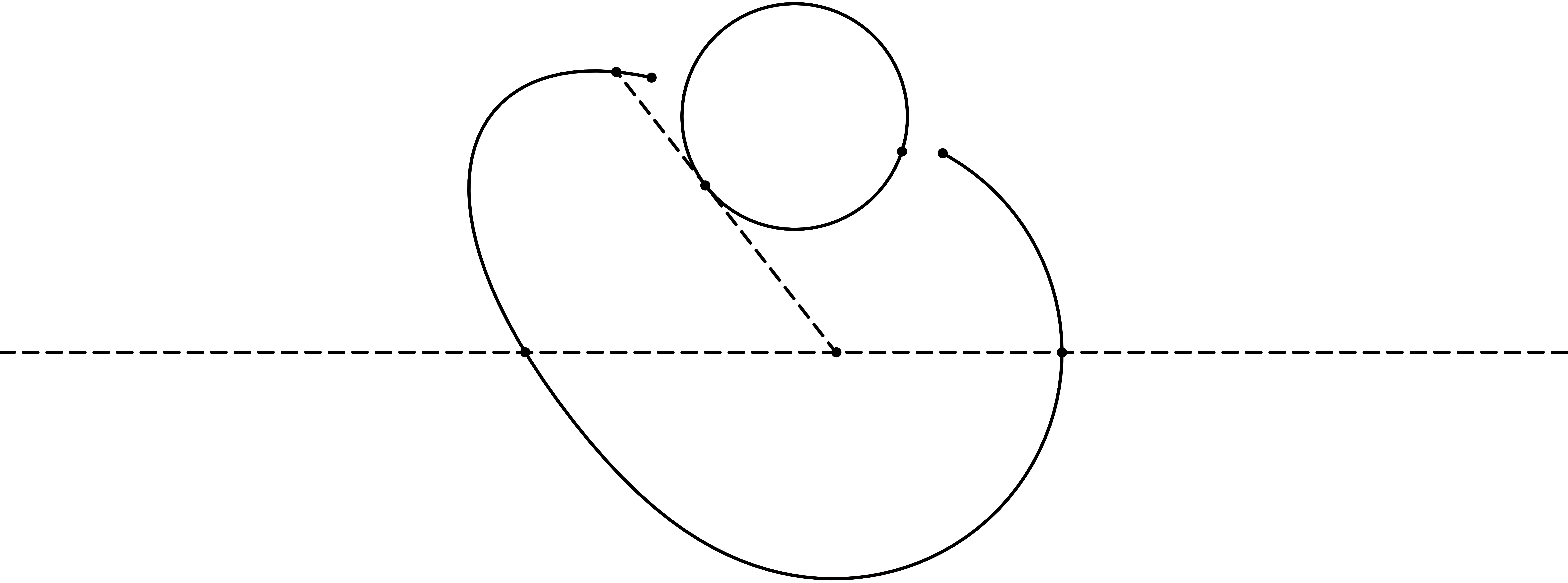}
    \put(-202.6,53){$z$}
    \put(-140,90){$\cQ_1$}
    \put(-212,103){$\cQ_2$}
    \put(-220,143){$\cQ_3$}
    \put(-257,145){$\cQ_4$}
    \put(-290,116){$\cQ_5$}
    \put(-220,10){$\cQ_6$}
    \put(-240,63){$\vector(0,1){25}$}
    \put(-250,70){$\bu$}
    \caption{Depiction of a curve $\cC$ that is decomposed into six pieces. A point $z$ and unit vector $\bu$ are also shown. The dashed lines illustrate how the decomposition of the curve is relate to the point $z$ and the unit vector $\bu$.}
	\label{fig:crazy}
\end{figure}

We will illustrate how the above steps can be carried out by considering an example. Consider the curve $\cC$ depicted in Figure~\ref{fig:crazy} consisting of disjoint components $\cQ_i$, for $i=1,\dots{},6$. Furthermore, consider the $z\in{}\Real^2$ and unit vector $\bu$ show in Figure~\ref{fig:crazy}. We now follow the previously mentioned steps.

\begin{itemize}
	\item[\bf{(Step 1)}] As $z\notin{}\cC$, we can skip this step.
	
	\item[\bf{(Step 2)}] In this step, we first decompose $\cC$ into radial components. Towards this end, we write $\cC=\cC_1\cup\cC_2\cup\cC_3$, where $\cC_1=\cQ_1\cup\cQ_2\cup\cQ_5\cup\cQ_6$, $\cC_2=\cQ_3$, and $\cC_3=\cQ_4$. According to Proposition~\ref{partit_thm}, we then have
	\begin{equation}\label{step2eqn}
		\ks(z,\cC,u)=\ks(z,\cQ_1\cup\cQ_2\cup\cQ_5\cup\cQ_6,\bu)-\ks(z,\cQ_3,\bu)+\ks(z,\cQ_4,\bu).
	\end{equation}
	\item[\bf{(Step 3)}] Next, we decompose the curves from the previous step into their components that lie in each half plane determined by $z$ and $\bu$. We have $\cC_1=\cC_{1,+}\cup{}\cC_{1,-}$, where $\cC_{1,+}=\cQ_1\cup{}\cQ_2\cup{}\cQ_5$ and $\cC_{1,-}=\cQ_6$, $\cC_2=\cC_{2,+}\cup{}\cC_{2,-}$, where $\cC_{2,+}=\cQ_3$ and $\cC_{2,-}=\emptyset$, and lastly $\cC_3=\cC_{3,+}\cup{}\cC_{3,-}$ where $\cC_{3,+}=\cQ_4$ and $\cC_{3,-}=\emptyset$. Therefore, Proposition~\ref{skpm} and \eqref{step2eqn} yield
	\begin{equation}\label{step3eqn}
		\ks(z,\cC,\bu)=\ks(z,\cQ_1\cup{}\cQ_2\cup{}\cQ_5,\bu)+\ks(z,\cQ_6,\bu)-\ks(z,\cQ_3,\bu)+\ks(z,\cQ_4,\bu).
	\end{equation}
	\item[\bf{(Step 4)}] We now seek to decompose the curves introduced in the prior step into their disjoint components. The only one of these curves that is not connected is $\cC_{1,+}$ which can be written as the disjoint union of $\cQ_1$, $\cQ_2$, and $\cQ_5$. Hence, by Proposition~\ref{disj_thm} and \eqref{step3eqn}, we have
	\begin{equation*}
		\ks(z,\cC,\bu)=\ks(z,\cQ_1,\bu)+\ks(z,\cQ_2,\bu)+\ks(z,\cQ_4,\bu)+\ks(z,\cQ_5,\bu)+\ks(z,\cQ_6,\bu)-\ks(z,\cQ_3,\bu).
	\end{equation*}
	\item[\bf{(Step 5)}] As in Part~\hyperlink{step1thm1}{1} of Theorem~\ref{lemma3}, we consider a large $N\in\Nat$ and take $\ell_{i,N}$ to be a partial linear interpolating spline with $N$ points of $\cQ_i$ for $i=1,\dots{},6$. We then find
	\begin{align}
		\nonumber{} \ks(z,\cC,\bu) &\approx{} \ks(z,\ell_{1,N},\bu)+\ks(z,\ell_{2,N},\bu)+\ks(z,\ell_{4,N},\bu)\\
		\label{Step5Final}&+\ks(z,\ell_{5,N},\bu)+\ks(z,\ell_{6,N},\bu)-\ks(z,\ell_{3,N},\bu).
	\end{align}
	\item[\bf{(Step 6)}] Making use of Proposition~\ref{disj_thm} we can express the nonlocal curvature of each $\ell_{i,N}$ in terms of the nonlocal curvatures of line segments, and then use Lemma~\ref{nlc_2p_ldef} to evaluate the nonlocal curvature of the segments. Substituting this information back into \eqref{Step5Final}, we obtain the final approximation for the nonlocal curvature of $\cC$.
\end{itemize}

\providecommand{\bysame}{\leavevmode\hbox to3em{\hrulefill}\thinspace}
\providecommand{\MR}{\relax\ifhmode\unskip\space\fi MR }
\providecommand{\MRhref}[2]{%
  \href{http://www.ams.org/mathscinet-getitem?mr=#1}{#2}
}
\providecommand{\href}[2]{#2}

\bibliographystyle{amsplain}

\end{document}